\def\ZZ{{\mathbb Z}}
\def\PP{{\mathbb P}}
\def\CC{{\mathbb C}}
\def\Span{{\rm Span}}
\def\Ann{{\rm Ann}}
\def\Im{{\rm Im}}
\newtheorem{lemma}{Lemma}[section]
\newtheorem{theorem}[lemma]{Theorem}
\newtheorem{proposition}[lemma]{Proposition}
\theoremstyle{definition}
\newtheorem{remark}[lemma]{Remark}
\theoremstyle{remark}
\newtheorem*{proof*}{Proof}
\numberwithin{equation}{section}
\title[Threefolds of order one in the six-quadric]{Threefolds of order one 
in the six-quadric}
\author{Lev Borisov and Jeff Viaclovsky}
\address{Department of Mathematics \\ University of Wisconsin \\
  Madison \\ WI \\ 53706 \\ USA}
\email{borisov@math.wisc.edu, jeffv@math.wisc.edu}
\thanks{Lev Borisov has been partially supported
by the National Science Foundation under grant DMS-0758480.
Jeff Viaclovsky has been partially supported
by the National Science Foundation under grant DMS-0804042.}
\date{August 12, 2008}
\begin{document}

\begin{abstract}
Consider the smooth quadric $Q_6$ in $\PP^7$. The
middle homology group $H_6(Q_6,\ZZ)$ is isomorphic to $\ZZ \oplus \ZZ$,
with a basis given by two classes of linear subspaces.
We classify all threefolds of bidegree $(1,p)$ inside $Q_6$.
\end{abstract}

\maketitle

\section{Introduction}

The study of surfaces inside the Grassmannian $G(2,4)$ of lines in
the complex projective space of dimension three has a long and rich history,
we refer the reader to  \cite{Gross} for references. 
The homology class of any such surface is given by two nonnegative integers
$(m,n)$ called the {\em{bidegree}}, with $m$ classically called the {\em{order}} and 
$n$ called the {\em{class}}\hspace{.2mm}. 
In particular, surfaces of order one have been completely classified, 
see \cite{Ran} for a modern treatment.

In this paper we are interested in \emph{threefolds} inside the
six-dimensional nonsingular quadric $Q_6$ in the 
complex projective space $\PP^7$. There are several reasons one 
may want to study these threefolds. First, the 
Grassmannian $G(2,4)$ 
is a nonsingular quadric in $\PP^5$. The nonsingular quadric $Q_6$
is the natural example in the next dimension. Namely, its middle homology
$H_6(Q_6,\ZZ)$ is isomorphic to $\ZZ \oplus \ZZ$ and it is natural to try to study 
threefolds inside $Q_6$ with a given homology class. 
We note that it is fairly easy to prove
that the only threefold of order zero is a linear $\PP^3$ of
bidegree $(0,1)$, so order one is the first interesting case. 
Second, the quadric
$Q_6$ is the orthogonal Grassmannian of isotropic subspaces of dimension four
in $\CC^8$, and thus has special geometric significance.  
Last, but not least, threefolds of bidegree $(1,p)$
in $Q_6$ appear naturally in the study of orthogonal complex structures
on subdomains of the sphere $S^6$, see \cite{BSV}. This was the original motivation
behind our interest in the topic. 

The main result of this paper is that every irreducible $Z$ of bidegree 
$(1,p)$ in $Q_6$ is given by one of the following:
\begin{itemize}
\item
$p=0$ and $Z$ is a horizontal $\PP^3$.
\item
$p=1$ and $Z$ is a smooth quadric in $\PP^4 \subset \PP^7$. 
\item
$p=3$ and $Z$ is a cone over the Veronese surface 
in $\PP^5 \subset \PP^7$. 
\item
$p\geq 1$ and $Z$ is a Weil divisor of bidegree $(1,p)$ in a rank 
four dimension four quadric $Q_4\subset Q_6$.
\end{itemize}
This classification will also enable us to classify the 
{\em{smooth}} threefolds of order one; the only examples are
the first two cases above, and the Segre embedding 
for $p=2$, see Theorem \ref{mainsmooth}. 
The plan of the paper is as follows. In Section \ref{statements}, we
state our main results in more detail,
and describe the last two cases above 
more explicitly in terms of homogeneous coordinates on $\PP^7$.
In Section \ref{prelim}, we collect
some preliminary results on the geometry of the quadric
$Q_6$ and of threefolds $Z\subset Q_6$ of bidegree 
$(1,p)$. In Section \ref{secmain}, we complete the proofs of our
main results, Theorems \ref{main} and \ref{mainsmooth}. 
Our arguments are completely classical in their nature.

{\bf Acknowledgements.} We would like to thank Igor Dolgachev for providing
useful references \cite{Gross} and \cite{Ran}. 
Lev Borisov also thanks Vasilii Iskovskikh for being an excellent 
undergraduate advisor at Moscow State University in 1987-1992.


\section{Statement of the results}\label{statements}
Throughout the paper all varieties are considered over
the field of complex numbers, and dimension means complex 
dimension.
The main object of our interest is a nonsingular quadric $Q_6$
of dimension $6$ and rank $8$ in the projective space $\PP^7$.
We also think of it as a nondegenerate bilinear form on the corresponding
vector space $\CC^8$, which allows us to define annihilators of
projective subspaces of $\PP^7$.

We recall some standard facts about the integer homology 
of $Q_6$. 
\begin{proposition}
The maximum dimension of a linear subspace $\PP^k$ in $Q_6$
is $k=3$. There are two types of such subspaces, which we
call \emph{horizontal} and \emph{vertical}. The spaces of the 
same type intersect each other in spaces of odd dimension
(empty, $\PP^1$ or $\PP^3$) and the spaces of different type
intersect each other in spaces of even dimension (point or $\PP^2$).
The homology group $H_6(Q_6,\ZZ)$ is freely generated by the 
classes of horizontal and vertical subspaces. 
\end{proposition}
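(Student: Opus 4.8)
The plan is to analyze the quadric $Q_6 \subset \PP^7$ via its associated nondegenerate symmetric bilinear form on $\CC^8$, so that linear subspaces $\PP^k \subseteq Q_6$ correspond exactly to isotropic subspaces $W \subseteq \CC^8$ of dimension $k+1$. Since the form has rank $8$ and a maximal isotropic subspace has dimension $4$, the largest projective subspace contained in $Q_6$ is $\PP^3$, establishing $k=3$. The two types of maximal subspaces come from the two families (spinor varieties) of maximal isotropic subspaces of an even-dimensional nondegenerate quadratic space, which is standard linear algebra.

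First I would set up the intersection theory for two maximal isotropic subspaces $W_1, W_2$ of dimension $4$. The key computation is the parity of $\dim(W_1 \cap W_2)$. Using the nondegeneracy of the form and the fact that each $W_i$ equals its own orthogonal complement (being maximal isotropic of dimension $n/2$ in a $2n$-dimensional space with $n=4$), I would show that $\dim(W_1 \cap W_2) \equiv n \pmod 2$ exactly when $W_1$ and $W_2$ lie in the same family, and has opposite parity otherwise. Concretely, one puts the form in the standard hyperbolic shape $\sum_{i=1}^4 x_i y_i$, takes one family to be spanned by the $x$-coordinates, and tracks how many coordinate flips relate a given maximal isotropic subspace to the base one; the family is recorded by the parity of the number of flips, and $\dim(W_1 \cap W_2)$ shifts parity with each flip. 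Translating to projective dimension $\dim \PP(W_1 \cap W_2) = \dim(W_1 \cap W_2) - 1$ then gives: same type forces odd projective intersection dimension (empty, $\PP^1$, or $\PP^3$), and different type forces even projective intersection dimension (a point or $\PP^2$).

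Next I would address the homology statement. I expect to invoke the Bruhat/Schubert cell decomposition of the quadric $Q_6$ coming from its realization as the homogeneous space $\mathrm{SO}(8)/P$ for a maximal parabolic $P$; this yields a cell structure whose cells are indexed by Schubert symbols, and in the middle dimension there are exactly two Schubert cells of real dimension $6$, represented by a horizontal and a vertical $\PP^3$. Since all cells occur in even real dimension, the homology is torsion-free and the middle Betti number is $2$, so $H_6(Q_6,\ZZ) \cong \ZZ \oplus \ZZ$ is freely generated by these two classes.

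The main obstacle will be the parity computation for $\dim(W_1 \cap W_2)$ and its clean identification with the two families, since this is the content that genuinely determines the even-versus-odd alternative. Rather than appealing to the general spinor-variety machinery, I would likely give a direct linear-algebra argument: reduce to a generic pair by deformation, and count using the relation $\dim(W_1 \cap W_2) + \dim(W_1 + W_2) = 8$ together with $(W_1 \cap W_2)^{\perp} = W_1 + W_2$, which forces $\dim(W_1+W_2) = 8 - \dim(W_1 \cap W_2)$ and constrains the intersection to have the correct parity via an orthogonality argument on the quotient. Everything else is bookkeeping once this parity invariant is pinned down, and the homology claim then follows from the standard cell structure without further difficulty.
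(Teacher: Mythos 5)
Your overall route is the one the paper itself takes: for the geometry of the linear subspaces the paper simply cites \cite[Chapter 6]{GH}, and for the homology statement it sketches exactly the argument you give (realize $Q_6$ as an orthogonal Grassmannian and use Schubert cycles). Your homology half is essentially sound: the Bruhat decomposition has cells only in even real dimensions, so homology is free and $H_6$ has rank two, generated by the closures of the two middle cells. One small point you assert without proof is that these two middle Schubert varieties are a horizontal and a vertical $\PP^3$ (one from \emph{each} family); this is true --- their closures meet in a $\PP^2$, hence they are of different types --- but note the paper sidesteps this identification entirely by using the intersection pairing: once $H_6(Q_6,\ZZ)$ is known to be free of rank $2$, the classes $[H]$, $[V]$ have unimodular intersection matrix and therefore automatically form a basis.

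The genuine gap is in the step you yourself flag as the crux: the parity of $\dim(W_1\cap W_2)$. The dimension count you propose is a tautology. Since each $W_i$ is its own orthogonal complement, $(W_1\cap W_2)^{\perp}=W_1^{\perp}+W_2^{\perp}=W_1+W_2$, so the identity $\dim(W_1+W_2)=8-\dim(W_1\cap W_2)$ is nothing more than inclusion--exclusion $\dim(W_1+W_2)=\dim W_1+\dim W_2-\dim(W_1\cap W_2)$; it holds for \emph{every} pair of maximal isotropic subspaces and places no constraint at all on the parity of the intersection. Likewise, the flip-counting picture is incomplete as stated: a general maximal isotropic subspace, e.g.\ ${\rm Span}(e_1+f_2,\, e_2-f_1,\, e_3,\, e_4)$ in a hyperbolic basis, is not obtained from the base one by coordinate flips, so ``number of flips'' is not even defined for it without further work. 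The nontrivial input that actually produces the parity is the following: fix a maximal isotropic $U$ and a maximal isotropic $W_0$ with $W_0\oplus U=\CC^8$, so the form identifies $U\cong W_0^{*}$; then any maximal isotropic $W$ transverse to $U$ is the graph of a map $A\colon W_0\to U$ whose associated bilinear form on $W_0$ is \emph{skew-symmetric}, and $W\cap W_0=\ker A$, whence $\dim(W\cap W_0)=4-\mathrm{rank}(A)\equiv 4 \pmod 2$ because skew forms have even rank. One then defines the two families via this parity and reduces the general (non-transverse) case by passing to $(W_1\cap W_2)^{\perp}/(W_1\cap W_2)$. Without an ingredient of this kind (even rank of skew forms, or equivalently a connectivity-plus-semicontinuity argument on the two $\mathrm{SO}(8)$-orbits), the even/odd dichotomy --- which is the substance of the first half of the proposition --- is not established.
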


\begin{proof}
See \cite[Chapter 6]{GH} for the first part of the statement. 
To show that the vertical and horizontal subspaces generate
$H_6(Q_6,\ZZ)$, it suffices to show that the latter is a
free abelian group of rank $2$ in view of the intersection pairing.
One can show this, for example, by realizing $Q_6$
as an orthogonal Grassmannian and decomposing it 
into a union of Schubert cycles.
\end{proof}

We will denote by $(a,b)$ the homology class given by $a$ horizontal
and $b$ vertical $\PP^3$-s. The main goal of this paper is to classify subvarieties 
$Z\subset Q_6$ of dimension three and homology class $(1,p)$.

\begin{remark}\label{explicit}
For the sake of definitiveness, we can assume that $Q_6$ is given
by the equation
\begin{equation}\label{q6}
x_1x_8-x_2x_7 + x_3x_6-x_4x_5=0
\end{equation}
in $\PP^7$ with homogeneous coordinates 
$(x_1\colon\ldots\colon x_8)$.
In this notation, particular examples 
of horizontal and vertical subspaces are given by 
$\{x_4=x_6=x_7=x_8=0 \}$ and $\{x_5=x_6=x_7=x_8=0 \}$, respectively.
Indeed, these subspaces intersect in a $\PP^2$, so
they are of different types, and it is up to us
to specify their names.
\end{remark}

We will now explain how one can generate examples 
of $Z\subset Q_6$ of bidegree $(1,p)$. We
refer the reader to \cite[Chapter II]{Hartshorne} for 
background on Cartier and Weil divisors. 

\begin{proposition}
Consider the dimension four singular quadric $Q_4$ 
of rank four given
by the equation
$x_3x_6=x_4x_5$ in the projective space $\PP^5$ with
homogeneous coordinates $(x_1\colon\ldots\colon x_6)$.
The class group of its Weil divisors is isomorphic 
to $\ZZ \oplus \ZZ$ and is generated by the 
divisors $D_1$ and $D_2$ which are cut out by the homogeneous
ideals  $\langle x_4, x_6\rangle$ and  $\langle x_5, x_6\rangle$ respectively.
\end{proposition}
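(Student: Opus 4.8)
The plan is to recognize $Q_4$ as a projective cone and reduce the computation to the Picard group of a smooth quadric surface. Since the defining equation $x_3x_6 = x_4 x_5$ involves only the four variables $x_3,x_4,x_5,x_6$, the variety $Q_4$ is the cone, with vertex the line $V = \{x_3=x_4=x_5=x_6=0\} \cong \PP^1$, over the smooth quadric surface $S = \{x_3 x_6 = x_4 x_5\}$ in the $\PP^3$ with coordinates $(x_3\colon x_4\colon x_5\colon x_6)$. Via the Segre parametrization $(x_3,x_4,x_5,x_6) = (su, sv, tu, tv)$ one has $S \cong \PP^1 \times \PP^1$, whose two rulings freely generate $\mathrm{Pic}(S) \cong \ZZ \oplus \ZZ$. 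First I would record that $Q_4$ is normal: it is a hypersurface, hence Cohen--Macaulay, and its singular locus is exactly the vertex $V$, which has codimension three in $Q_4$; thus $Q_4$ is regular in codimension one, and Serre's criterion applies. In particular the Weil divisor class group $\mathrm{Cl}(Q_4)$ is well defined.

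The two key reductions are excision and homotopy invariance. Because $V$ has codimension three, removing it deletes no codimension-one subvariety, so the restriction map $\mathrm{Cl}(Q_4) \to \mathrm{Cl}(Q_4\setminus V)$ is an isomorphism. Next, linear projection away from $V$, namely $(x_1\colon\cdots\colon x_6)\mapsto (x_3\colon x_4\colon x_5\colon x_6)$, exhibits $Q_4\setminus V$ as a Zariski-locally trivial $\AA^2$-bundle over $S$; over the chart $x_3\neq 0$ one has the trivialization sending a point to $((x_4/x_3, x_5/x_3, x_6/x_3),(x_1/x_3,x_2/x_3))$, and similarly on the other charts. By homotopy invariance of the divisor class group under affine bundles, the flat pullback $\mathrm{Pic}(S) = \mathrm{Cl}(S) \to \mathrm{Cl}(Q_4\setminus V)$ is an isomorphism. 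Combining the two reductions gives $\mathrm{Cl}(Q_4)\cong \ZZ\oplus\ZZ$.

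It remains to match the proposed generators with the two rulings. Intersecting with $S$, the locus $x_4=x_6=0$ becomes $sv=tv=0$, i.e.\ $v=0$, one ruling of $S$, while $x_5=x_6=0$ becomes $t=0$, the other ruling. Moreover each $D_i$ is exactly the cone over the corresponding ruling: for instance $\{x_4=x_6=0\}$ already lies on $Q_4$ and is the $\PP^3$ joining $V$ to the ruling $\{v=0\}$, so under the isomorphisms above $[D_1] = \pi^*[\{v=0\}]$, and likewise $[D_2] = \pi^*[\{t=0\}]$. Since the two rulings freely generate $\mathrm{Pic}(S)$ and the composite of pullback with the excision isomorphism is an isomorphism, $D_1$ and $D_2$ freely generate $\mathrm{Cl}(Q_4)$.

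The main obstacle is the second reduction: one must verify carefully that the projection is a genuine locally trivial affine bundle, so that homotopy invariance legitimately applies, rather than merely having $\AA^2$ fibers set-theoretically; and one must confirm that $D_1$ and $D_2$ map to the two \emph{distinct} rulings, since if they landed on the same ruling they would fail to generate. Both points are settled by the explicit coordinate computations above.
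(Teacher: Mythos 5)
Your proof is correct, but it takes a genuinely different route from the paper's. The paper removes the two divisors themselves: on $Q_4$ the hyperplane section $\{x_6=0\}$ is exactly $D_1\cup D_2$ (since the equation forces $x_4x_5=0$ there), and the complement $Q_4\cap\{x_6\neq 0\}$ is isomorphic to $\CC^4$ with coordinates $(x_1/x_6,\,x_2/x_6,\,x_4/x_6,\,x_5/x_6)$, the remaining coordinate $x_3/x_6$ being determined by the equation; since $\mathrm{Cl}(\CC^4)=0$, the excision sequence immediately shows that $[D_1]$ and $[D_2]$ generate $\mathrm{Cl}(Q_4)$, with freeness left to the cited Hartshorne exercise (and, in the context of the paper, also visible from the fact that $D_1,D_2$ have independent images $(1,0)$ and $(0,1)$ in $H_6(Q_6,\ZZ)$). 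You instead remove the codimension-three vertex line $V$ (which leaves $\mathrm{Cl}$ unchanged) and identify the complement, via projection from $V$, as a Zariski-locally trivial $\AA^2$-bundle over the smooth quadric $S\cong\PP^1\times\PP^1$, pulling back $\mathrm{Pic}(S)\cong\ZZ\oplus\ZZ$ with the two rulings going to $D_1$ and $D_2$. Your route needs a slightly heavier input --- homotopy invariance of the class group for locally trivial affine bundles, which rests on the product case $\mathrm{Cl}(U\times\AA^n)\cong\mathrm{Cl}(U)$ together with an excision/patching argument that you invoke rather than spell out, though it is standard --- but in exchange it delivers the full statement in one stroke: generation and freeness simultaneously, plus a clean geometric identification of $D_1,D_2$ as the cones over the two rulings. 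The paper's argument is shorter and more elementary, but as literally written it only establishes generation; the rank-two freeness requires the additional input it delegates to the reference.
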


\begin{proof}
The complement of $D_1\cup D_2$ in $Q_4$ is isomorphic to $\CC^4$ with
coordinates $(\frac {x_1}{x_6}, \frac {x_2}{x_6}, \frac {x_4}{x_6}, \frac {x_5}{x_6})$.
Consequently, any divisor on $Q_4$ can be pushed away from
this $\CC^4$ and written as a linear combination of $D_1$ and $D_2$,
see also \cite[Exercise II.6.5]{Hartshorne}.
\end{proof}

The quadric $Q_4$ is the intersection of the nonsingular quadric $Q_6$ given by 
\eqref{q6} and the dimension five subspace $x_7=x_8=0$. The rank of
the quadratic form drops to $4$, so this $Q_4$ is a double cone over a smooth 
$2$-quadric $Q_2$. 
\begin{proposition}
The images of the classes of $D_1$ and $D_2$ in $Q_6$ under 
the natural map $H_6(Q_4,\ZZ)\to H_6(Q_6,\ZZ)$ are 
given by $(1,0)$ and $(0,1)$, respectively. 
\end{proposition}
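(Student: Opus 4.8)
The plan is to identify $D_1$ and $D_2$ with concrete linear subspaces of $\PP^7$ and match them against the two subspaces named in Remark~\ref{explicit}, after which the computation of the homology classes is immediate.

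First I would observe that $Q_4$ lies in the subspace $\{x_7=x_8=0\}$, so on $Q_4$ the coordinates $x_7$ and $x_8$ vanish automatically. Therefore the Weil divisor cut out by $\langle x_4,x_6\rangle$ is, as a subset of $\PP^7$, precisely $\{x_4=x_6=x_7=x_8=0\}$, and the one cut out by $\langle x_5,x_6\rangle$ is $\{x_5=x_6=x_7=x_8=0\}$. A direct substitution into \eqref{q6} shows that each of these linear $\PP^3$'s lies entirely in $Q_6$, since the remaining monomial $x_3 x_6 - x_4 x_5$ vanishes identically on each. As each subspace is reduced and irreducible of dimension $3=\dim Q_4-1$, it represents the corresponding divisor class with multiplicity one.

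Next I would appeal to Remark~\ref{explicit}, where the very spaces $\{x_4=x_6=x_7=x_8=0\}$ and $\{x_5=x_6=x_7=x_8=0\}$ are declared to be the \emph{horizontal} and the \emph{vertical} $\PP^3$, respectively. Hence $D_1$ is a horizontal $\PP^3$ and $D_2$ a vertical $\PP^3$, so their classes in $H_6(Q_6,\ZZ)$ are $(1,0)$ and $(0,1)$ by the very definition of the generators. Finally, the natural map $H_6(Q_4,\ZZ)\to H_6(Q_6,\ZZ)$ is the pushforward along the inclusion $Q_4\hookrightarrow Q_6$, which carries the fundamental class of a subvariety of $Q_4$ to its fundamental class in $Q_6$; applying this to $D_1$ and $D_2$ yields the asserted images $(1,0)$ and $(0,1)$.

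The argument involves no hard analysis; the only point requiring care is the coordinate bookkeeping — remembering that $x_7$ and $x_8$ are forced to vanish on $Q_4$, so that the divisors are genuine $\PP^3$'s rather than something of higher codimension — together with checking that the naming convention of Remark~\ref{explicit} is exactly what aligns $D_1$ with the horizontal class and $D_2$ with the vertical class. Were that convention reversed, the two images would simply be interchanged.
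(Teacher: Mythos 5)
Your proposal is correct and follows exactly the same route as the paper, whose entire proof is the single line ``This follows from Remark \ref{explicit}''; you have simply spelled out the coordinate identification ($D_1=\{x_4=x_6=x_7=x_8=0\}$, $D_2=\{x_5=x_6=x_7=x_8=0\}$ inside $Q_6$) that the remark makes immediate. The bookkeeping, the check that these $\PP^3$'s lie on $Q_6$, and the matching with the horizontal/vertical naming convention are all accurate.
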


\begin{proof}
This follows from Remark \ref{explicit}.
\end{proof}

The following proposition describes the divisors $D$ of degree $(1,p)$ in $Q_4$
for $p\geq 1$. Observe that the divisor $D+(p-1)D_1$ is Cartier and is a multiple 
of the ample divisor $(1,1)$ on $Q_4$.
\begin{proposition}\label{classifyinQ4}
Every Weil divisor $D$ of degree $(1,p)$ on $Q_4$ can be written as the 
divisor of a polynomial $f(x_1,\ldots, x_6)$ minus $(p-1)D_1$,  with the polynomial
$f$ of the form
\begin{equation}\label{weil}
f= g_p(x_4,x_6)+\sum_{i=1,2,3,5} x_ig^{(i)}_{p-1}(x_4,x_6),
\end{equation}
where $g_p$ is homogeneous of  degree $p$ and $g_{p-1}^{(i)}$ are homogeneous
of degree $p-1$.
\end{proposition}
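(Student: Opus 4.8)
The plan is to convert the observation preceding the proposition into an explicit polynomial, and then to read off the permissible monomials from an order-of-vanishing condition along $D_1$. Assume first that $D$ is effective (the case we need, in which $D$ is an actual subvariety). Set $E=D+(p-1)D_1$. By the observation, $E$ is an effective Cartier divisor whose class is $p$ times the hyperplane class $(1,1)$. Since $Q_4\subset\PP^5$ is a normal quadric hypersurface, it is projectively normal, so every global section of $\OO_{Q_4}(p)$ is the restriction of a degree-$p$ form on $\PP^5$; hence $E=\mathrm{div}(f)$ for a homogeneous polynomial $f(x_1,\dots,x_6)$ of degree $p$, well defined modulo the ideal generated by $q=x_3x_6-x_4x_5$. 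The proposition then amounts to showing that $f$ may be chosen of the form \eqref{weil}.

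The mechanism forcing $f$ into that form is the inequality $v_{D_1}(f)\ge p-1$, valid because $\mathrm{div}(f)=E\ge(p-1)D_1$; here $v_{D_1}$ is the order of vanishing along the prime Weil divisor $D_1=\{x_4=x_6=0\}$. To evaluate $v_{D_1}$ I would use that $Q_4$ is singular only along its vertex line $\{x_3=x_4=x_5=x_6=0\}$, hence smooth at a general point of $D_1$. In the affine chart $x_3=1$ one solves the equation of $Q_4$ as $x_6=x_4x_5$, which identifies $Q_4$ locally with $\AA^4$ with coordinates $x_1,x_2,x_4,x_5$, and realizes $D_1$ as the smooth hypersurface $\{x_4=0\}$. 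A monomial $x^\alpha=\prod_i x_i^{a_i}$ dehomogenizes in this chart to $x_1^{a_1}x_2^{a_2}x_4^{a_4+a_6}x_5^{a_5+a_6}$, so its order along $D_1$ is $a_4+a_6$, the total degree in $x_4$ and $x_6$.

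Finally I would reduce $f$ modulo $q$ to its standard form, in which no monomial is divisible by $x_3x_6$. A direct check shows that two distinct monomials dehomogenize to the same monomial in this chart exactly when they differ by a power of the relation $x_3x_6=x_4x_5$, and no two standard monomials are so related; hence there is no cancellation and $v_{D_1}(f)=\min\{a_4+a_6\}$ over the standard monomials appearing in $f$. The condition $v_{D_1}(f)\ge p-1$ then forces $a_4+a_6\ge p-1$, i.e. $a_1+a_2+a_3+a_5\le 1$, for each such monomial. Enumerating the standard degree-$p$ monomials with this property---namely $x_4^ax_6^b$ with $a+b=p$, the monomials $x_ix_4^ax_6^b$ with $a+b=p-1$ for $i\in\{1,2,5\}$, and $x_3x_4^{p-1}$---one sees that each is of the shape in \eqref{weil}, which finishes the argument.

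I expect the main obstacle to be the middle step: computing the order of vanishing along the non-Cartier Weil divisor $D_1$ correctly. This requires restricting to the smooth locus of $Q_4$, choosing a chart in which the quadric relation is eliminated and $D_1$ becomes a coordinate hyperplane, and then verifying that distinct standard monomials produce no cancellation, so that the valuation is genuinely computed term by term.
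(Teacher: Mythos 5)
Your proof is correct and follows essentially the same route as the paper's: surjectivity of restriction of degree-$p$ forms to $Q_4$ (the paper derives this from the short exact sequence and $H^1(\PP^5,\OO(p-2))=0$; your appeal to projective normality of a normal quadric hypersurface is the same fact), followed by computing the order of vanishing along $D_1$ in an affine chart where the quadric relation is eliminated, with a no-cancellation argument forcing each monomial to have degree at least $p-1$ in $x_4,x_6$. The only cosmetic differences are your choice of chart ($x_3=1$ rather than the paper's $x_5\neq 0$) and of normal form (no monomial divisible by $x_3x_6$ rather than by $x_4x_5$), which make your final enumeration slightly more restrictive than \eqref{weil} but still of that shape.
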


\begin{proof}
For each $p\geq 0$, we have the short exact sequence of sheaves
on $\PP^5$
$$
0\to {\mathcal O}(p-2) \to {\mathcal O}(p) \to {\mathcal O}_{Q_4}(p,p)\to 0.
$$
Since $H^1(\PP^5,{\mathcal O}(p-2))=0$ (see \cite[page 156]{GH}), 
the restriction map
\begin{align*}
H^0(\PP^5,{\mathcal O}(p)) \to H^0(Q_4,{\mathcal O}_{Q_4}(p,p)) 
\end{align*}
is surjective. Hence all Cartier divisors of type $(p,p)$ on $Q_4$
are given as zeroes of a degree $p$ polynomial $f(x_1,\ldots,x_6)$,
intersected scheme theoretically with $Q_4$.

It remains to investigate the condition on $f$ which says that
the corresponding divisor on $Q_4$ is of the form $D+(p-1)D_1$, that
is, that $f$ vanishes to order at least $(p-1)$ on $D_1$, modulo the
equation of $Q_4$.
We can use the equation of $Q_4$ to assume that no 
monomials of $f$ are multiples of $x_4x_5$, so that
$$f=g(x_1,x_2,x_3,x_5,x_6) + x_4 h(x_1,x_2,x_3,x_4,x_6),
$$
for some homogeneous polynomials $g$ and $h$ of degrees $p$ and $(p-1)$,
respectively. 
Consider the open subset $U$ of $Q_4$ given by
$x_5\neq 0$. This subset
is isomorphic to $\CC^4$ with coordinates
$$
(y_1,\ldots,y_4) = \left( \frac {x_1}{x_5},\frac {x_2}{x_5},
\frac {x_3}{x_5}, \frac {x_6}{x_5} \right),
$$
and the intersection of $D_1$ with $U$ is given by $y_4=0$.
Every polynomial $f$ gives the polynomial on $U$
\begin{align*}
f(x_1,\ldots, x_6) x_5^{-p} &= f(y_1,y_2,y_3, y_3 y_4, 1, y_4)\\
&=g(y_1,y_2,y_3,1,y_4) + y_3y_4 h(y_1,y_2,y_3,y_3y_4,y_4).
\end{align*}
In order for this polynomial to vanish to order $(p-1)$
at $y_4=0$, it should be divisible by $y_4^{p-1}$. The total degrees of the monomials in
$g$ are at most $p$, and the total degree of the
monomials in $y_3y_4 h$ are larger than $p$. Consequently,
there will be no cancellations among these monomials, and
each of them must be divisible by $y_4^{p-1}$. This means
that $g = (\mbox{linear}) \cdot x_6^{p-1}$, and the total degree of all monomials in $h$
in $x_4$ and $x_6$ is at least $p-2$. This means that the
total degree of all monomials in $f$ in $x_4$ and $x_6$
is at least $p-1$, which leads to \eqref{weil}.
\end{proof}

\begin{remark} 
\label{dcone}
If the variables $x_1$ and $x_2$ do not appear in 
(\ref{weil}), then the divisor $D$ will be a double cone over
a $(1,p)$ curve $C \subset Q_2$. Divisors of this form play a
crucial role in the study of orthogonal complex structures globally 
defined on $\mathbb{R}^6$, see \cite{BSV}. 
\end{remark}

When $p=3$, there is an irreducible subvariety of $Q_6$ of bidegree $(1,3)$
which does not lie in a $\PP^5\subset \PP^7$, and hence can not be of the type 
considered in Proposition  \ref{classifyinQ4}.
Geometrically, it can be described as follows. The intersection of the hyperplane
$x_8=0$ with $Q_6$ given by \eqref{q6}
is a cone over the Grassmannian $G(2,4)$, which is identified 
with a nonsingular dimension four quadric in $\PP^5$.
Let us recall the Pl\"ucker equation for
$G(2,4)$. If we have $w=\sum_{i<j}p_{ij}v_i\wedge v_j$, then the condition
 $w\wedge w=0$ means 
\begin{align*}
p_{12}p_{34} - p_{13}p_{24}+p_{23}p_{14}=0.
\end{align*}
We will henceforth view $G(2,4)$ as a subset of $Q_6$ by the
embedding
\begin{align*}
(p_{12},p_{13},p_{23},p_{14},p_{24},p_{34}) 
\mapsto (0, p_{12},p_{13},p_{23},p_{14},p_{24},p_{34}, 0).
\end{align*}
There is a surface $S$ of type $(1,3)$ in $G(2,4)$ namely  
the quadratic Veronese embedding $\PP^2 \hookrightarrow G(2,4)$
given by 
\begin{align}
\label{Veronese}
(u_0 \colon u_1 \colon u_2) \mapsto (u_0^2 \colon u_0 u_1 \colon u_0 u_2
\colon u_1^2 - u_0 u_2 \colon u_1 u_2 \colon u_2^2). 
\end{align}
The irreducible threefold of bidegree $(1,3)$ is then constructed as a 
cone over this surface, which is the image 
of the weighted projective space with variables $(u_0,u_1,u_2,u_3)$ with 
weights $(1,1,1,2)$ under the map given by 
\begin{align}
\label{coneV}
( u_0 \colon u_1 \colon u_2 \colon u_3 ) \mapsto 
(u_3 \colon u_0^2 \colon u_0 u_1 \colon 
u_0 u_2 \colon u_1^2 - u_0 u_2 \colon u_1 u_2 \colon u_2^2 \colon 0 ). 
\end{align} 
Indeed, by taking a cone over a surface of bidegree 
$(a,b)$ in $G(2,4)$ one gets a threefold of bidegree $(a,b)$ or
$(b,a)$ in $Q_6$, so the 
variety \eqref{coneV} is of bidegree $(1,3)$ or $(3,1)$. Consider 
its intersection
with the vertical $\PP^3$ given by 
\begin{equation}\label{test}
0=x_1=x_3=x_5=x_7,
\end{equation}
which occurs at the single point
$(u_0\colon u_1 \colon u_2 \colon u_3)=(1 \colon 0 \colon 0\colon 0)$.
It is easy to verify that the intersection is transversal, 
therefore it must be of bidegree $(1,3)$.

We are now ready to state the main theorem.
\begin{theorem}\label{main}
Every irreducible $Z$ of bidegree $(1,p)$ in $Q_6$ is given by one of the following,
up to the action of  ${\rm Aut}(Q_6)={\rm PSO}(8,\CC)$.
\begin{itemize}
\item
$p=0$ and $Z$ is a horizontal $\PP^3$.
\item
$p=1$ and $Z$ is a smooth quadric in $\PP^4\subset \PP^7$. 
\item
$p=3$ and $Z$ is the cone over the Veronese surface given by \eqref{coneV}.
\item
$p\geq 1$ and $Z$ is the Weil divisor  described in Proposition \ref{classifyinQ4}
in the rank four dimension four quadric $Q_4\subset Q_6$.
\end{itemize}
\end{theorem}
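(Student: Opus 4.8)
The plan is to combine the rationality forced by the order-one condition with a careful analysis of the linear span of $Z$, organized by the classical theory of low-degree subvarieties of projective space.

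I would begin with the numerical input. Using the intersection form on $H_6(Q_6,\ZZ)$, in which $h\cdot h=v\cdot v=0$ and $h\cdot v=1$, together with the fact that the restriction of the cube of the hyperplane class equals $h+v$, an irreducible $Z$ of bidegree $(1,p)$ has degree $p+1$ in $\PP^7$, meets a general vertical $\PP^3$ in a single point, and meets a general horizontal $\PP^3$ in $p$ points. Next comes the key device: fix a horizontal $\PP^3$, say $W\subset Q_6$, and project $\PP^7$ from $W$. A general $\PP^4$ containing $W$ meets $Q_6$ in $W$ together with a residual $\PP^3$; since this residual meets $W$ in a $\PP^2$, it is of the opposite, vertical, type, so the projection realizes $Q_6$ birationally as the family of vertical $\PP^3$s. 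As each such vertical $\PP^3$ meets $Z$ in exactly one point, the induced rational map $Z\dashrightarrow\PP^3$ has degree one. Thus $Z$ is rational, parametrized by $t\mapsto Z\cap V_t$ as $V_t$ runs over this family of vertical spaces, and the problem reduces to identifying which images can occur.

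With this in hand I would classify by $k=\dim\langle Z\rangle$, where $\langle Z\rangle$ denotes the linear span. The bound $\deg Z\ge k-2$ for a nondegenerate irreducible threefold gives $k\le p+3$. The small cases are immediate: $k=3$ forces $Z=\langle Z\rangle$ to be a (horizontal) $\PP^3$ and $p=0$; $k=4$ forces $Z=\langle Z\rangle\cap Q_6$, an irreducible (indeed smooth) quadric threefold, and $p=1$. When $k=5$, $Z$ is a prime divisor in the four-dimensional quadric $Q_4=\langle Z\rangle\cap Q_6$, and the decisive point is a parity observation: for $p\ge 2$ the class $(1,p)$ is not symmetric under interchanging the two rulings, whereas on a quadric of rank at least five every divisor is a multiple of the hyperplane class, which has bidegree $(1,1)$ and hence yields only symmetric bidegrees $(d,d)$. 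Therefore $Q_4$ has rank four, and Proposition \ref{classifyinQ4} together with the normal form \eqref{weil} identifies $Z$ with the asserted Weil divisor. When $Z$ is of minimal degree and $k=6$, one recognizes the cone over the Veronese surface \eqref{coneV}, so $p=3$.

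The step I expect to be the main obstacle is controlling the span for large $p$: the inequality $k\le p+3$ permits $k=6$ or $k=7$ once $p$ is large, and these non-minimal-degree possibilities are not excluded by the dimension count alone. The heart of the argument is to prove a span bound --- that no irreducible $(1,p)$ threefold spans a $\PP^6$ other than the Veronese cone, and none spans $\PP^7$ --- which I would attack using the explicit parametrization by $\PP^3$ together with the way the two rulings of $Q_6$ restrict to $\langle Z\rangle\cap Q_6$, thereby also eliminating the unwanted rational normal scrolls. Once the span bound is established, the four displayed cases exhaust the possibilities, and since all the constructions are canonical, the action of ${\rm PSO}(8,\CC)$ carries any such $Z$ to the stated model.
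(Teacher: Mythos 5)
There is a genuine gap, and you have in effect named it yourself: everything hinges on the span bound (no irreducible $(1,p)$ threefold spans a $\PP^6$ other than the Veronese cone, and none spans $\PP^7$), and your proposal does not prove it --- it only announces that you ``would attack'' it via the parametrization by $\PP^3$ and the rulings. This bound is precisely the content of the paper's Theorems \ref{smallpinP5} and \ref{p4inP5}, and it is the bulk of the work. The paper handles it in two separate ways: (a) if $Z$ is singular, Lemma \ref{singular} (a consequence of the order-one trick, Lemma \ref{keytrick}) puts $Z$ inside $\Ann(A)$ for a singular point $A$; then projecting from $A$ to $G(2,4)$ and invoking Ran's classification of order-one surfaces in $G(2,4)$ either drops the span to a $\PP^5$ or produces the Veronese cone; (b) if $Z$ is smooth and $p>3$, the paper blows up $Q_6$ along a generic horizontal $\PP^3$ meeting $Z$ in $p$ points, studies the birational morphism $Y\to\PP^3$ (your degree-one projection), and uses connectedness of fibers (Zariski's Main Theorem) plus a discrepancy computation of $K_Y$ restricted to the exceptional $\PP^2$'s to force the $p$ points to be collinear; projecting from that line then lands $Z$ in a $\PP^5$. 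Your proposal contains the starting point of (b) --- the birational map to $\PP^3$ --- but none of the argument that makes it bite; rationality of $Z$ alone does not bound the span, and eliminating scrolls and non-minimal-degree spanning threefolds for large $p$ is exactly the hard part, not a footnote.

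Two smaller points. First, your $k=4$ case is wrong as stated: $Z=\langle Z\rangle\cap Q_6$ need not be smooth; rank $3$ and rank $4$ quadric threefolds are irreducible of bidegree $(1,1)$, and the paper sorts these into the fourth bullet (they can be moved into $Q_4$ because the annihilator of their span contains an isotropic line), reserving the second bullet for rank $5$. Second, your $k=5$ parity argument (divisors on a quadric fourfold of rank $\geq 5$ are multiples of the hyperplane class, hence of symmetric bidegree) is a legitimate alternative to the paper's argument, which instead picks $x\in\Ann(P)\setminus Q_6$ and contradicts Lemma \ref{sing}; this is a nice local substitution, but it does not repair the main gap.
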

We will prove this theorem in Section \ref{secmain}. Meanwhile, 
we will make some remarks on this classification. 
\begin{remark}
As a corollary of our classification, we see that any threefold of order 
one in $Q_6$ is contained in a $\PP^5$, except in the third case
(the span of the $Z$ in this case is a $\PP^6$).
Furthermore, in view of Proposition \ref{classifyinQ4}, all
but the second and third cases are characterized by the condition 
that span of $Z$ is contained in a 
$\PP^5\subset \PP^7$ such that the annihilator of this 
$\PP^5$ with respect to $Q_6$ is an isotropic $\PP^1$. 
This is because if $Z$ is a smooth quadric in a $\PP^4$, then this 
$\PP^4$ can not be embedded into a $\PP^5$ with isotropic annihilator. 
Indeed, the annihilator
of a $\PP^4$ to which $Q$ restricts as a maximum rank quadric is a $\PP^2$ with 
a maximum rank quadric, which does not contain any lines.
\end{remark}

\begin{remark}
The automorphism group of $Q_4$ moves around the Weil divisors of type $(1,p)$ on it, and 
we have made no attempt to further restrict the shape of the equation $f$ in Proposition 
\ref{classifyinQ4} to classify the orbits of this action. 
From the point of view of orthogonal complex structures,
we are interested in classification of subvarieties of $Q_6$
up to the action of the real Lie group ${\rm{SO}}_{\circ}(7,1)$, which
is the subgroup of ${\rm{PSO}}(8,\CC)$ that preserves the twistor
fibration $ \PP^3 \to Q_6\to S_6$, see \cite{BSV}.
\end{remark}

Using this classification, we can characterize exactly which threefolds of 
order one are smooth. The proof of the following theorem 
is found in Section \ref{secmain}. 

\begin{theorem}
\label{mainsmooth}
There are only three smooth threefolds of order one in $Q_6$
up to ${\rm Aut}(Q_6)$, given in the following list.
\begin{itemize}
\item $p=0$ and $Z$ is a horizontal $\PP^3$.
\item $p=1$ and $Z$ is a smooth quadric in $\PP^4 \subset \PP^7$.
\item
$p=2$ and $Z$ is the Segre embedding 
$\PP^1\times \PP^2 \hookrightarrow \PP^5\subset \PP^7$ given by 
\begin{align*}
\big( (u_0 \colon u_1),(v_0\colon v_1\colon v_2) \big)
\mapsto (u_0v_0\colon u_1v_0\colon u_0v_1\colon
u_0v_2\colon u_1v_1\colon u_1v_2\colon 0\colon 0),
\end{align*}
which corresponds to $f=x_1x_6-x_2x_4$ in \eqref{weil}.
\end{itemize}
\end{theorem}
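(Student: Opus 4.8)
The plan is to invoke Theorem \ref{main}, which already reduces the problem to the four listed families, and then decide smoothness family by family. The horizontal $\PP^3$ ($p=0$) and the smooth quadric in $\PP^4$ ($p=1$) are manifestly smooth and appear in the list. The cone \eqref{coneV} over the Veronese surface ($p=3$) is the projective cone with vertex $[1\colon 0\colon\cdots\colon 0]$ over a surface that is not a linear subspace, hence is singular at that vertex, and is therefore excluded. Thus everything rests on deciding exactly which of the Weil divisors $Z=D$ of bidegree $(1,p)$ in $Q_4$ from Proposition \ref{classifyinQ4} are smooth.

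The first point is that $Q_4=\{x_3x_6=x_4x_5\}$ is itself singular: its singular locus is precisely the line $L=\{x_3=x_4=x_5=x_6=0\}$, the vertex line of the double cone over $Q_2$, while $Q_4$ is smooth off $L$. Since $L\subset D_1$, the affine charts $x_3\neq 0$, $x_4\neq 0$, $x_5\neq 0$, $x_6\neq 0$ cover $Q_4\setminus L$ and lie in the smooth locus. On each such chart $Q_4$ is a smooth fourfold and, exactly as in the proof of Proposition \ref{classifyinQ4}, $Z$ is the hypersurface cut out by $\tilde f=f/\ell^{\,p-1}$, where $\ell$ is the local equation of $D_1$ and $\tilde f$ is a genuine polynomial because $f$ vanishes to order $p-1$ along $D_1$ modulo the quadric. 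Smoothness of $Z$ on $Q_4\setminus L$ is then the ordinary Jacobian condition on $\tilde f$, a routine computation that already eliminates the generic members of the family and constrains the admissible shapes of $f$ away from $L$ (in particular it rules out the double cones of Remark \ref{dcone} for $p\geq 2$).

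The decisive and most delicate point is smoothness along $L$, which I would test in the charts $x_1=1$ and $x_2=1$. There $Q_4$ becomes $\CC\times C_0$ with $C_0=\{x_3x_6=x_4x_5\}\subset\CC^4$ the ordinary conifold singularity, and $D_1=\{x_4=x_6=0\}$ is a non-Cartier Weil divisor through the vertex. For every $p\geq 2$ one checks that $f$ vanishes identically on $L$, so the behaviour of $Z$ there cannot be read off from $f$ alone: one must first strip off the $(p-1)D_1$ component. Concretely, I would compute the local ideal of $Z$ at a point $q\in L$ by saturating $(f,\,x_3x_6-x_4x_5)$ with respect to the ideal $(x_4,x_6)$ of $D_1$, and then compute $\dim T_qZ$, with $Z$ smooth at $q$ precisely when this equals $3$. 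The main obstacle is exactly controlling the extra generators produced by this saturation, and their linear parts at $q$, uniformly as $f$ ranges over \eqref{weil}. I expect the outcome to be that for every $p\geq 3$ some point of $L$ is forced to be singular, while for $p=2$ smoothness along $L$ forces the linear terms of $f$ to pair $\{x_1,x_2\}$ nondegenerately with $\{x_4,x_6\}$ and all remaining terms to vanish.

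Finally I would use the automorphisms of $Q_4$, which extend to ${\rm Aut}(Q_6)={\rm PSO}(8,\CC)$, together with the freedom to modify $f$ by multiples of the quadric equation and of $D_1$, to normalise the surviving $p=2$ case to $f=x_1x_6-x_2x_4$. For this $f$ the ideal of $Z$ is generated by the $2\times 2$ minors of the matrix with rows $(x_1,x_3,x_4)$ and $(x_2,x_5,x_6)$, so in the chart $x_1=1$ one has $x_5=x_2x_3$ and $x_6=x_2x_4$, exhibiting $Z$ as the smooth graph over $\CC^3$ with coordinates $x_2,x_3,x_4$; this identifies $Z$ with the Segre $\PP^1\times\PP^2$ and confirms smoothness, including along $L$. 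Combined with the observation that a smooth $p=1$ Weil divisor is a hyperplane section of $Q_4$, hence the quadric of the second case, this produces exactly the three families. As a consistency check, a nondegenerate smooth $Z\subset\PP^5$ of bidegree $(1,p)$ has degree $p+1$, so $p=2$ gives a threefold of minimal degree $3$ and codimension $2$, which on the rank four quadric $Q_4$ is forced to be the Segre embedding.
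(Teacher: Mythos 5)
Your reduction via Theorem \ref{main} matches the paper's: $p=0$ and $p=1$ are fine, and the cone \eqref{coneV} is singular at its vertex, so everything comes down to the Weil divisors of Proposition \ref{classifyinQ4} in $Q_4$. But that is exactly where your proposal stops being a proof. The heart of the theorem --- that \emph{every} bidegree $(1,p)$ divisor in $Q_4$ is singular when $p\geq 3$, and that smoothness for $p=2$ forces $f=x_1x_6-x_2x_4$ up to automorphism --- is not established; it is only announced as the \emph{expected} outcome of a saturation-and-tangent-space computation that you do not carry out, and whose crux you yourself flag as unresolved (``the main obstacle is exactly controlling the extra generators produced by this saturation \ldots uniformly as $f$ ranges over \eqref{weil}''). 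A plan plus an expectation is not a proof, and there is no indication of how the computation would be made uniform in $p$ and in the coefficients of $f$. The paper closes this gap with a genuinely different idea that your proposal is missing: restrict $Z$ to the pencil of $\PP^3$-s $P_{(a,b)}\subset Q_4$ containing the singular line $L$. On each $P_{(a,b)}\setminus L$ the divisor $Z$ is Cartier of class $\mathcal{O}(1)$, so $Z$ is ruled by a $\PP^1$-family of planes $Q_{(a,b)}$, and the point where $Q_{(a,b)}$ meets $L$ is given by the degree-$(p-1)$ map $\psi$ built from $(g^{(1)}_{p-1},g^{(2)}_{p-1})$. For $p\geq 3$ (and in all degenerate cases of $\psi$) two distinct planes of the ruling pass through a common point of $L$ and meet transversally there, so the Zariski tangent space at that point has dimension at least four, forcing a singularity; for $p=2$ the map $\psi$ is an isomorphism and a coordinate normalization yields the Segre. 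No ideal saturation is ever needed.

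A secondary error: you claim the Jacobian criterion on $Q_4\setminus L$ ``rules out the double cones of Remark \ref{dcone} for $p\geq 2$.'' It does not --- a cone over a \emph{smooth} $(1,p)$ curve in $Q_2$ is smooth away from its vertex line; all of its singularities lie on $L$. So the double-cone case, like everything else of substance here, must be handled by the along-$L$ analysis, which is precisely the part your proposal leaves open.
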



\section{Preliminary results on the geometry of $Q_6$ and $Z$}\label{prelim}
Here we collect various facts that will be useful later. 
For the remainder of the paper we will assume that $Z$ is an
irreducible threefold of bidegree $(1,p)$ in $Q_6$. 

\begin{proposition}\label{degree}
If $Z$ has homology class $(1,p)$, then it has degree $p+1$ in $\PP^7$.
\end{proposition}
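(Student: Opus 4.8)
The plan is to compute the degree purely as an intersection number inside $Q_6$, exploiting the bilinearity of the intersection pairing together with the fact that both generators of $H_6(Q_6,\ZZ)$ are represented by honest linear $\PP^3$'s, each of degree one in $\PP^7$.

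First I would recall that for a three-dimensional subvariety $Z\subset\PP^7$ the degree may be expressed as $\deg_{\PP^7}Z=\int_Z h^3$, where $h$ denotes the hyperplane class; equivalently, it is the number of points in the intersection of $Z$ with a generic linear $\PP^4\subset\PP^7$. Since $Z$ lies in $Q_6$, I regard $h$ as the restriction to $Q_6$ of $c_1(\OO_{\PP^7}(1))$, so that this integral is the value of the pairing of $h^3$ with the homology class $[Z]\in H_6(Q_6,\ZZ)$, computed entirely within $Q_6$.

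Next, writing $[Z]=(1,p)$ in the basis of horizontal and vertical classes, bilinearity of the pairing gives
\begin{equation*}
\deg_{\PP^7}Z = (1,0)\cdot h^3 + p\,(0,1)\cdot h^3.
\end{equation*}
Each of the two generators is represented by a linear $\PP^3\subset\PP^7$, and a linear $\PP^3$ has degree one in $\PP^7$; hence $(1,0)\cdot h^3=(0,1)\cdot h^3=1$. Substituting yields $\deg_{\PP^7}Z = 1+p$, as claimed.

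The argument is short, and the only point requiring any care is the identification of the ambient degree $\int_Z h^3$ with an intersection number taken inside $Q_6$. This follows from the compatibility of the hyperplane class under the inclusions $Z\hookrightarrow Q_6\hookrightarrow\PP^7$ together with the projection formula, and from the fact that the classes $(1,0)$ and $(0,1)$ are realized by genuine linear $\PP^3$'s rather than merely by abstract homology classes. I do not expect a genuine obstacle here.
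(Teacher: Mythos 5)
Your proof is correct and is essentially the paper's own argument: the paper likewise notes that degree is a linear function on $H_6(Q_6,\ZZ)$ and that the horizontal and vertical $\PP^3$'s have degree one, yielding $\deg Z = 1 + p$. Your write-up merely spells out this linearity via the pairing with $h^3$, which is a fine (and slightly more explicit) rendering of the same idea.
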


\begin{proof}
Horizontal and vertical $\PP^3$-s have bidegree $(1,0)$ and $(0,1)$
respectively and degree one. It remains to observe that degree
is a linear function on $H_6(Q_6,\ZZ)$.
\end{proof}

\begin{proposition}\label{basics}
The space of vertical (or horizontal) $\PP^3$-s in $Q_6$ is of dimension six,
and is isomorphic to a nonsingular quadric.
The space of vertical (or horizontal) $\PP^3$-s through a given point in $Q_6$
is of dimension $3$ and is isomorphic to a $\PP^3$.
The space of vertical (or horizontal) $\PP^3$-s through a given $\PP^1\subset Q_6$ is of dimension one and is isomorphic to a $\PP^1$. 
There is exactly one vertical and one horizontal $\PP^3$ through a given $\PP^2\subset Q_6$.
\end{proposition}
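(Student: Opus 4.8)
The plan is to translate the whole statement into the linear algebra of isotropic subspaces of $\CC^8$ with respect to the bilinear form defining $Q_6$. Under this dictionary a maximal $\PP^3\subset Q_6$ is a maximal isotropic $4$-dimensional subspace $V\subset\CC^8$, and a linear $\PP^{k-1}\subset Q_6$ is an isotropic $k$-dimensional subspace $W$. The workhorse is the following reduction: for isotropic $W$ of dimension $k$, any isotropic $V\supseteq W$ satisfies $V\subseteq W^\perp$, the quotient $W^\perp/W$ carries a nondegenerate quadratic form in dimension $8-2k$, and $V\mapsto V/W$ is a bijection between maximal isotropic $4$-spaces containing $W$ and maximal isotropic $(4-k)$-spaces of $W^\perp/W$ (the inverse being preimage in $W^\perp$). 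Because $\dim\big((V_1/W)\cap(V_2/W)\big)=\dim(V_1\cap V_2)-k$ for any two such $V_1,V_2$, the parity criterion distinguishing the two families, recorded in the previous proposition (same type meet in even-dimensional vector intersections $0,2,4$), is preserved, so the bijection matches the horizontal and vertical families with two consistently labelled families of $W^\perp/W$.

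With this reduction the three statements about subspaces through a fixed linear space become statements about maximal isotropic subspaces of one family in a nondegenerate quadratic space $\CC^{2m}$, for $m=1,2,3$. First I would treat a fixed $\PP^2$ ($k=3$): here $W^\perp/W\cong\CC^2$ has exactly two isotropic lines, one in each family, giving exactly one vertical and one horizontal $\PP^3$. Next a fixed $\PP^1$ ($k=2$): here $W^\perp/W\cong\CC^4$, the quadric surface it defines in $\PP^3$ is $\PP^1\times\PP^1$, and its two rulings are precisely the two families of isotropic $2$-planes, each isomorphic to $\PP^1$. Finally a fixed point ($k=1$): here $W^\perp/W\cong\CC^6$, and identifying $\CC^6$ with $\Lambda^2\CC^4$ under the quadratic form $\omega\mapsto\omega\wedge\omega$, the maximal isotropic $3$-planes split into the family $\{\,v\wedge\CC^4 : v\in\CC^4\,\}\cong\PP^3$ and the family $\{\,\Lambda^2 H : H\subset\CC^4 \ \mbox{a hyperplane}\,\}\cong\PP^3$. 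These are the classical isomorphisms $D_2=A_1\times A_1$ and $D_3=A_3$, which I would check directly from the explicit descriptions just given.

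For the first sentence (the case $k=0$) let $\mathcal{F}$ denote the family of vertical $\PP^3$-s. Its dimension is cleanest to extract from the incidence variety $I=\{(x,\Pi)\in Q_6\times\mathcal{F} : x\in\Pi\}$: projection to $Q_6$ has all fibers isomorphic to the $\PP^3$ of vertical spaces through a point (the case $k=1$ above), so $\dim I=6+3=9$, while projection to $\mathcal{F}$ has fibers $\Pi\cong\PP^3$, giving $\dim\mathcal{F}=9-3=6$. All fibers are constant by the transitivity of $\mathrm{PSO}(8,\CC)$, so no genericity issues arise.

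The remaining assertion, that $\mathcal{F}$ is isomorphic to a nonsingular quadric, is the real content and the step I expect to be the main obstacle, since it is exactly the $D_4$ triality isomorphism and has no counterpart for the larger quadrics. I would establish it in one of two ways. The first is to invoke triality: the order-three symmetry of the $D_4$ Dynkin diagram interchanges the two spinor nodes and the vector node, carrying the homogeneous space $\mathrm{Spin}(8)/P$ parametrizing one family of maximal isotropic $4$-planes to the homogeneous space parametrizing the isotropic lines, which is $Q_6$ itself. The second, more self-contained, is to coordinatize $\mathcal{F}$ by the half-spinor representation $\Delta^+\cong\CC^8$, realize the spinor embedding $\mathcal{F}\hookrightarrow\PP(\Delta^+)=\PP^7$, and exhibit the single $\mathrm{Spin}(8)$-invariant quadratic relation that cuts out the image; smoothness then follows from irreducibility and homogeneity. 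Once this identification is in hand the proposition is complete, the rest being the elementary linear algebra above.
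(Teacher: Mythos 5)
Your proposal is correct, and its backbone coincides with the paper's: both reduce the statements about $\PP^3$-s through a fixed $P_j \cong \PP^{j-1}$ to maximal isotropic $(4-j)$-planes in the quotient $\Ann(P_j)/P_j \cong \CC^{2(4-j)}$, and both identify the six-dimensional family of all vertical (or horizontal) $\PP^3$-s with a nonsingular quadric via the spinor embedding --- your triality alternative is the same $D_4$ fact in different language, and the paper simply cites Cartan for it. The genuine difference is in the low-dimensional cases: the paper disposes of dimensions $6$, $4$, $2$ uniformly by quoting Cartan's theorem that every nonzero spinor is pure there, so the parameter spaces are the projectivized half-spin representations $\PP^3$, $\PP^1$, and a point, whereas you verify these cases by explicit classical linear algebra --- the two isotropic lines of $\CC^2$, the two rulings of $\PP^1 \times \PP^1$ for $\CC^4$, and the $\Lambda^2\CC^4$ model of $\CC^6$ with its two families $\{v \wedge \CC^4\}$ and $\{\Lambda^2 H\}$, each a $\PP^3$. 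Your route buys self-containedness (spinor theory enters only where it is unavoidable, namely the identification of the full family with a $6$-quadric), at the cost of case-by-case work; the paper's citation covers all the small cases in one stroke. Your incidence-variety computation of the dimension of the family is likewise an extra step the paper does not need, since there the dimension falls out of the quadric identification itself; and your bookkeeping $\dim\bigl((V_1/W)\cap(V_2/W)\bigr) = \dim(V_1\cap V_2) - k$, which ensures the two families match up consistently under the reduction, makes explicit a point the paper leaves implicit.
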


\begin{proof}
This follows from the classical description of the 
isotropic Grassmannians of low dimension. The first space is 
described by oriented or reverse oriented isotropic $4$-planes in $\CC^8$, 
which are determined by a projective pure spinor. In dimension 
$8$, a pure spinor satisfies a single quadratic relation of rank $8$, so these 
spaces are each nonsingular quadrics of dimension~$6$, see \cite[Chapter VI]{Cartan}. 
For the last three spaces of the proposition, 
we need to consider oriented isotropic $4$-planes 
in $\CC^8$ containing a fixed line $P_1$, plane $P_2$, or $3$-plane $P_3$,
respectively. This is equivalent to 
choosing an oriented isotropic $(4-j)$--plane $V_j \subset \mbox{Ann}(P_j)/ P_j = \CC^{2(4-j)}$. 
This space is a $\PP^3$, $\PP^1$, or a single point since all non-zero spinors 
are pure in dimensions $6$, $4$, and $2$, respectively, see \cite[Chapter VI]{Cartan}.  
\end{proof}

\begin{lemma}\label{sing}
If $p\neq 1$ and $Z$ is contained in a $\PP\cong \PP^6\subset \PP^7$, then $Q_6\cap \PP$ is singular.
\end{lemma}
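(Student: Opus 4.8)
The plan is to argue by contraposition: I will assume that $Q_5 := Q_6 \cap \PP$ is a \emph{smooth} quadric and deduce that $p = 1$. Since $Z$ is irreducible of dimension $3$ and is contained in $\PP \cong \PP^6$, it is an irreducible threefold inside the smooth five-dimensional quadric $Q_5 \subset \PP^6$. The whole argument rests on the observation that the homology class of $Z$ inside $Q_6$ is the pushforward under the inclusion $i\colon Q_5 \hookrightarrow Q_6$ of its class inside $Q_5$, so it suffices to understand $H_6(Q_5,\ZZ)$ together with the map $i_*$.

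First I would record the structure of $H_6(Q_5,\ZZ)$. In contrast to the even-dimensional quadric $Q_6$, whose middle homology has rank two precisely because it carries two families of maximal linear $\PP^3$-s, the \emph{odd}-dimensional smooth quadric $Q_5$ has a single family of maximal linear subspaces (which are $\PP^2$-s), and all of its even Betti numbers equal $1$. In particular $H_6(Q_5,\ZZ) \cong \ZZ$, and a generator is the class of a codimension-two linear section $Q_3 := Q_5 \cap \PP^4$, a smooth quadric threefold. Indeed $Q_3$ meets a maximal linear $\PP^2 \subset Q_5$ in $\PP^4 \cap \PP^2 = $ one point of $\PP^6$, and since by Poincar\'e duality the pairing $H_6(Q_5,\ZZ) \times H_4(Q_5,\ZZ) \to \ZZ$ is unimodular, $[Q_3]$ is primitive, hence a generator. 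Consequently $[Z] = d\,[Q_3]$ in $H_6(Q_5,\ZZ)$ for some integer $d$.

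Next I would compute $i_*[Q_3] \in H_6(Q_6,\ZZ)$. Writing $i_*[Q_3] = a h + b v$ in terms of the horizontal and vertical generators $h, v$, I pair with $v$ and $h$ using the intersection numbers $h\cdot h = v\cdot v = 0$ and $h \cdot v = 1$ (two generic $\PP^3$-s of the same type are disjoint, whereas two of different type meet in a single point). Geometrically, a generic codimension-three linear section $Q_6 \cap \PP^4$ representing $[Q_3]$ meets a horizontal (resp.\ vertical) $\PP^3 \subset Q_6$ in $\PP^4 \cap \PP^3 = $ one point of $\PP^7$, so $i_*[Q_3] \cdot v = a = 1$ and $i_*[Q_3]\cdot h = b = 1$, giving $i_*[Q_3] = (1,1)$. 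Combining, $(1,p) = i_*[Z] = d\,(1,1) = (d,d)$, which forces $d = 1$ and $p = 1$, as required by the contrapositive. The one step requiring care is the identification of $H_6(Q_5,\ZZ)$ as cyclic with generator $[Q_3]$ --- the essential phenomenon being that an odd-dimensional quadric has no second ruling --- since this is exactly what prevents the class $(1,p)$ from being realized inside $Q_5$ unless $p = 1$; the pushforward computation is then routine.
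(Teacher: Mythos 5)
Your proof is correct and follows essentially the same route as the paper's: both identify $H_6(Q_5,\ZZ)\cong\ZZ$ with generator a codimension-two linear section, push forward to $H_6(Q_6,\ZZ)$ to see that the image consists of classes of type $(m,m)$, and conclude $p=1$. The only (cosmetic) difference is in computing that the generator maps to $(1,1)$: you pair a generic $Q_6\cap\PP^4$ against horizontal and vertical $\PP^3$-s using the hyperbolic intersection form, whereas the paper degenerates the $\PP^4$ so that $Q_6\cap\PP^4$ becomes a rank-two quadric, i.e.\ a union of a horizontal and a vertical $\PP^3$.
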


\begin{proof}
Suppose $Q_6\cap \PP$ is a smooth $5$-quadric $Q_5$. By the 
Lefschetz hyperplane theorem and Poincar\`e duality, 
$H_6(Q_5, \ZZ)  \cong \ZZ$, with the generator given by 
the intersection of $2$ hyperplanes with $Q_5$. Consider the
image of $H_6(Q_5, \ZZ)$ in $H_6(Q_6,\ZZ)$ under the 
inclusion map. The generator maps to the intersection of 
$3$ hyperplanes with $Q_6$, which is $Q_6 \cap \PP^4$. 
We may choose the $\PP^4$ so that the rank of the quadratic form  
drops to $2$, thus the generator will map to a $3$-quadric 
of rank $2$ which is the union of a horizontal $\PP^3$ 
and a vertical $\PP^3$ and is therefore of bidegree $(1,1)$.
Consequently, the image of $H_6(Q_5, \ZZ)$ in $H_6(Q_6,\ZZ)$ 
consists of classes of type $(m,m)$.
Since the map from $Z$ to $Q_6$ factors through 
$Q_5$, the class of $Z$ is of type $(m,m)$, so $p=1$, contradiction.
\end{proof}

The following trick is one of the key technical tools of the paper.
\begin{lemma}\label{keytrick}
Let $Z$ be a dimension three subvariety of $Q_6$ of bidegree $(1,p)$.
Any vertical $\PP^3$ either intersects $Z$ transversely in a 
single point, or contains
at least a one-dimensional set of points of $Z$.
\end{lemma}

\begin{proof}
Suppose that 
$Z$ intersects the vertical $\PP^3$ at a disjoint set of points $\{z_1,\ldots,z_k\}$.
Then the intersection of the corresponding homology classes equals to the sum
of  local contributions. On one hand, this intersection equals $1$. On the other hand,
every transveral intersection point contributes $1$ and every non-transversal
isolated intersection point contributes strictly more than $1$, see 
\cite[Proposition 8.2 (a) and (c)]{Fulton}.
\end{proof}

A typical application of Lemma \ref{keytrick} is the following.
\begin{lemma}\label{singular}
If $A$ is a singular point of $Z$, then $Z\subset \Ann(A)$. 
\end{lemma}

\begin{proof}
Consider the vertical $\PP^3$-s that contain $A$. Each of them intersects
$Z$ nontransversely at $A$, consequently, each of them must contain
at least  a dimension one subset of $Z$. There is a dimension three space 
of such $\PP^3$-s, so the space of pairs 
\begin{align*}
W  \equiv \{ (P, x) \ | \
P \mbox{ is a vertical } \PP^3 \mbox{ containing } A, \mbox{ and } x \in P \cap Z \},
\end{align*}
is at least four-dimensional. Consider the projection of this space
to the second factor $\pi_2 \colon W \rightarrow Z$, 
and observe that the fibers outside of 
$\Ann(A)\cap Z$ are empty.
Fibers over points of $\Ann(A)\cap Z$ are one-dimensional by Proposition
\ref{basics}. 
This shows 
that $\Ann(A)\cap Z$ must be three-dimensional, so $Z\subset \Ann(A)$ since
$Z$ is irreducible.
\end{proof}
The following proposition gives another restriction 
on the intersection of $Z$ with a vertical $\PP^3$.
\begin{proposition}
If $Z$ intersects a vertical $\PP^3$ in a set of dimension two, 
then this set must consist of a $\PP^2$, and perhaps some
lower dimensional irreducible components. 
\end{proposition}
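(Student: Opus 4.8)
Let $P$ be the vertical $\PP^3$ in question and let $S$ denote the union of the two-dimensional irreducible components of $P\cap Z$. Since $\dim(P\cap Z)=2<3=\dim Z$, we have $P\not\subseteq Z$, so $S$ is a pure two-dimensional subvariety of the smooth threefold $P\cong\PP^3$; as such it is a hypersurface, cut out in $P$ by a single homogeneous form $F$ of some degree $d\ge 1$, and its degree in $\PP^7$ is again $d$. The proposition is therefore equivalent to the statement that $d=1$: that $F$ is linear, so that $S$ is a single reduced plane and no other two-dimensional component occurs. The plan is to bound $d$ by the intersection number of $Z$ with a vertical $\PP^3$, which the order-one hypothesis forces to be $1$.

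Writing $h$ and $v$ for the classes of a horizontal and a vertical $\PP^3$, two vertical subspaces are generically disjoint and a vertical meets a horizontal in one point, so $v\cdot v=0$ and $h\cdot v=1$ in $H_6(Q_6,\ZZ)$; hence $[Z]\cdot[P]=(h+pv)\cdot v=1$. This is precisely the numerical fact behind Lemma~\ref{keytrick}: a general vertical $\PP^3$ meets $Z$ transversely in a single point. Here, though, $P\cap Z$ is two-dimensional, so we are looking at a highly improper, excess intersection inside the smooth sixfold $Q_6$, in which $S$ is a distinguished component. The crux of the argument is to show that such a two-dimensional component must contribute at least its degree $d$ to the number $[Z]\cdot[P]=1$, which then forces $d\le 1$ and hence $d=1$; tightness of the bound simultaneously rules out any further two-dimensional component, giving the single reduced plane.

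I would establish the lower bound ``contribution $\ge d$'' in two complementary ways, and I expect this to be the main obstacle. The clean route is the excess intersection formula together with the positivity of intersection products on a smooth variety --- the same input \cite[Proposition~8.2]{Fulton} used for Lemma~\ref{keytrick}: the contribution of the clean part of $S$ is $\int_S c_2(E)$ for the rank-two excess bundle $E$, and one must bound $c_2(E)$ below by $H^2$ (whose integral over $S$ is $d$), exploiting the positivity of the normal bundle of $Z$ in $Q_6$ and the fact that $P$ is a maximal isotropic (self-annihilating) subspace, which fixes $N_{S/P}=\OO_S(d)$ and thereby the relevant Chern data. The parallel classical route, more in the spirit of the present paper, runs through Lemma~\ref{keytrick}: a general line $\ell\subset P$ meets $S$ in $d$ points, and the pencil of vertical $\PP^3$'s through $\ell$ (a $\PP^1$ by Proposition~\ref{basics}) then has the property that, as soon as $d\ge 2$, every member other than $P$ contains those $\ge 2$ points of $Z$ and hence, by Lemma~\ref{keytrick}, a whole curve of $Z$; the task is to organize this one-parameter family of curves together with the degree data (Proposition~\ref{degree}) into the same numerical bound $d\le 1$. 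Reconciling these excess contributions with the single transverse point guaranteed by the order-one class is the heart of the matter.
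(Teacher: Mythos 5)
Your setup is fine --- your $S$ (the paper's $Z_0$) is a hypersurface of some degree $d$ in $P\cong\PP^3$, the homological intersection $[Z]\cdot[P]=1$, and the goal is $d=1$ --- but neither of your two routes is actually carried out, and the step you defer in each is precisely the content of the proposition. Route (a) is a restatement of the difficulty rather than a plan for resolving it: \cite[Proposition 8.2]{Fulton}, which underlies Lemma \ref{keytrick}, applies to \emph{isolated} intersection points only, and for an excess two-dimensional component the general theory gives at best nonnegativity of the contribution of a distinguished variety (on a homogeneous space), never the lower bound ``contribution $\geq d$'' that your argument needs. Producing that bound would require genuinely controlling the Segre class of the normal cone (your excess bundle $E$), and you offer no argument; as written this step is not justified and is not how the paper proceeds --- the paper avoids excess intersection theory entirely.

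Route (b) assembles the right ingredients --- for $d\geq 2$ a generic line $\ell\subset P$ meets $Z_0$ in $d\geq 2$ points, and then every vertical $\PP^3\neq P$ through $\ell$ contains a curve of $Z$ by Lemma \ref{keytrick} --- but you stop exactly where the real work begins, and the conclusion does not come from ``degree data'' via Proposition \ref{degree}; it comes from a two-sided dimension count that your write-up lacks. Let $\FF$ be the five-dimensional family of vertical $\PP^3$-s meeting $P$ in a line (Proposition \ref{basics}) and set $W=\{(P_1,z)\ |\ P_1\in\FF,\ z\in P_1\cap(Z\setminus Z_0)\}$. Your observation shows that the fiber of $W\to\FF$ over a generic $P_1$ is at least one-dimensional (the curve produced by Lemma \ref{keytrick} meets $Z_0$ in only finitely many points, since $P_1\cap Z_0$ lies in the line $P_1\cap P$), so $\dim W\geq 6$. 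On the other hand, $W$ projects to $Z\setminus Z_0$, which is three-dimensional, and for $z\notin P$ the fiber is at most two-dimensional: the line $P_1\cap P$ must lie in the plane $\Ann(z)\cap P\cong\PP^2$, and that line determines $P_1$ uniquely as the vertical $\PP^3$ through $\Span(z,\ell)$ (Proposition \ref{basics} again), while the locus of $z\in P$ is at most a curve and contributes less. Hence $\dim W\leq 5$, a contradiction, which is what forces $d=1$ and a single $\PP^2$. This incidence-variety estimate is the missing idea; without it (or the unproven excess-intersection bound of route (a)) your proposal is a strategy outline, not a proof.
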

\begin{proof}
Denote this vertical $\PP^3$ by $P$, and let $Z_0$ be
the union of the $2$-dimensional irreducible components of $P \cap Z$. 
Assume that the total degree of $Z_0$ is $ d > 1$. 
Consider the space $S$ of all vertical $\PP^3$-s in $Q_6$ which intersect 
$P$ in a line. By Proposition \ref{basics}, this is a 5-dimensional manifold. 
Define
\begin{align*}
W \equiv \{(P_1, z ) \ | \ P_1 \in S
\mbox{ and } z \in P_1\cap (Z \setminus Z_0) \}.
\end{align*}
We claim that for generic 
$P_1 \in S$, $\dim\big(P_1\cap (Z \setminus Z_0) \big) \geq 1$. 
Indeed, otherwise $P_1$ and $Z$ intersect at isolated points,
provided we choose $P_1$ so that the corresponding 
line in $P$ intersects $Z_0$ in isolated points. We can account 
for $d$ points of intersection
on $Z_0$, and all other points of intersection contribute positively, 
which contradicts the fact that $Z$ is of order one.
This implies that $\dim(W) \geq 6$. 

On the other hand, $W$ projects to $Z \setminus Z_0$, which is of dimension three.
The fibers of this projection are of dimension at most two, since any $P_1 \in S$ 
containing $z$ is completely determined by choice 
of a line in the two-dimensional subspace $\Ann(z)\cap P$.
This implies that $\dim(W) \leq 5$, which is a contradiction. 
Consequently, if $Z_0$ is nonempty then it has degree one, 
which implies that it is a linear subspace \cite[page 174]{GH}. 
\end{proof}
The following two lemmas describe the intersection of $Z$ with a
horizontal $\PP^3$. 
\begin{lemma}\label{withhp}
If $Z$ intersects a horizontal $\PP^3$ (set-theoretically) at a set of 
dimension one, then this set must consist of one line in $\PP^3$ and 
perhaps some disjoint points.
\end{lemma}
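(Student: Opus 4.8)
The plan is to mimic the dimension-count in the preceding proposition, now using \emph{planes} (rather than lines) as the geometric link between $H$ and the vertical $\PP^3$-s. Write $H$ for the horizontal $\PP^3$, and let $C$ be the union of the one-dimensional irreducible components of $H\cap Z$; by hypothesis $C$ is nonempty, so it has some degree $d\geq 1$. It suffices to prove $d=1$, for then $C$ is a reduced degree-one curve, hence a single line \cite[page 174]{GH}, and the remaining components of $H\cap Z$ are the advertised disjoint points. I would therefore argue by contradiction, assuming $d\geq 2$.

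First I would set up the relevant family of linear spaces. Since $H$ is horizontal, a vertical $\PP^3$ meets it in a point or a $\PP^2$, and by the last assertion of Proposition \ref{basics} each plane $\PP^2\subset H$ lies in a unique vertical $\PP^3$, which then meets $H$ in exactly that plane. Hence the vertical $\PP^3$-s meeting $H$ in a plane form a three-dimensional family $\mathcal S$, parametrized by the dual $\PP^3$ of planes in $H$. The key input is Lemma \ref{keytrick}: for generic $V\in\mathcal S$ the plane $V\cap H$ meets the degree-$d$ curve $C$ in $d\geq 2$ distinct points, so $V$ cannot meet $Z$ in a single transverse point and must therefore contain a one-dimensional subset of $Z$. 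Since $V\cap H$ is a plane inside $H$, the set $V\cap Z\cap H$ is zero-dimensional for generic $V$, so this curve is not contained in $C$. Consequently, in the incidence variety $W=\{(V,x)\ |\ V\in\mathcal S,\ x\in (V\cap Z)\setminus C\}$ the projection to $\mathcal S$ is dominant with one-dimensional generic fibers, giving $\dim W\geq 4$.

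The main obstacle is the matching upper bound $\dim W\leq 3$, obtained from the other projection $\pi\colon W\to Z$. Here I would use that $H$ corresponds to a maximal isotropic subspace, so $\Ann(H)=H$; thus for $x\notin H$ the intersection $\Ann(x)\cap H$ is a single plane. For such $x$, any $V\in\mathcal S$ through $x$ satisfies $V=\Span(\PP^2,x)$ with $\PP^2=V\cap H$, and isotropy of $V$ forces $\PP^2\subseteq\Ann(x)\cap H$; since the latter is one plane, $V$ is uniquely determined, so these fibers of $\pi$ are finite. The only other points in the image lie in $(Z\cap H)\setminus C$, which is a finite set, and they contribute at most dimension two. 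Hence $\dim W\leq 3$, contradicting $\dim W\geq 4$. This forces $d=1$, so the one-dimensional part of $H\cap Z$ is a single line, as claimed.
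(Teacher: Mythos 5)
Your proof is correct and takes essentially the same route as the paper's: the same incidence variety of pairs (vertical $\PP^3$ meeting $H$ in a plane, point of $Z$ inside it), the same lower bound $\dim W\geq 4$ via Lemma \ref{keytrick} applied to a generic plane section of the degree-$d$ curve $C$, and the same upper bound via the uniqueness of the vertical $\PP^3$, namely $\Span(x,\Ann(x)\cap H)$, through a point $x\notin H$. The only cosmetic difference is that the paper removes all of $H$ (rather than just $C$) from the second factor, so every fiber of the second projection is a single point and your extra bound for the finitely many points of $(Z\cap H)\setminus C$ is not needed.
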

\begin{proof}
Denote this horizontal $\PP^3$ by $P$, and
denote by $C$ the union of the dimension one components of $Z\cap P$.
Define 
\begin{align*}
W' \equiv \{ (P', z) \ | \ P' \mbox{ is a vertical } \PP^3 \mbox{ with } 
\dim(P' \cap P) = 2, \mbox{ and } z \in (Z \cap P') \setminus P\}. 
\end{align*}
If the degree of $C$ (in $P = \PP^3$) is strictly larger than one, 
it means that a generic $P'$ as above intersects $C$ in at least two points,
since the degree of a curve in $P$ is given by counting transverse 
intersections with a $\PP^2 \subset P$. 
By Lemma \ref{keytrick}, a generic such $P'$ contains at least a curve 
of points of $Z$.
This means that the generic fiber of the projection on the 
first factor is at least $1$ dimensional. 
By Proposition \ref{basics}, there is a dimension three space
of such $P'$, so the dimension of $W'$ is at least $4$. 
However, all fibers under the second projection 
are points. This follows because for $z \in Z \setminus P$, 
$\dim(\Ann(z)\cap P) = 2$,
so the fiber over $z$ is the vertical 
$\PP^3$ given by $\Span(z,\Ann(z)\cap P)$.
Since $\dim Z=3$, this implies that $\dim (W') \leq 3$, 
a contradiction. 
\end{proof}
\begin{lemma}
\label{horiz}For $p > 0$, the generic horizontal $\PP^3$ intersects 
$Z$ transversely at $p$ disjoint smooth points.
\end{lemma}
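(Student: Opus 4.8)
The plan is to establish the claim by a dimension count that is dual in spirit to the one used in Lemma \ref{withhp}, trading the roles of horizontal and vertical $\PP^3$-s. The statement has two parts: that the generic horizontal $\PP^3$ meets $Z$ in exactly $p$ points, and that at each such point the intersection is transverse and $Z$ is smooth there. The first part is essentially a consequence of the bidegree: by definition of the homology class $(1,p)$, the intersection number of $Z$ with a horizontal $\PP^3$ equals the coefficient paired against the vertical class, which is $p$ (this is how the bidegree was set up after Remark \ref{explicit}, using the intersection pairing and the fact that horizontal meets horizontal in odd-dimensional and horizontal meets vertical in even-dimensional spaces). So I would first record that the \emph{total} intersection number with a horizontal $\PP^3$ is $p$, using Proposition \ref{degree} or directly the pairing on $H_6(Q_6,\ZZ)$.

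The substance is showing that for \emph{generic} horizontal $P$ the set $Z\cap P$ is finite, consists of smooth points of $Z$, and the intersection is transverse there, so that each point contributes exactly $1$ and there are exactly $p$ of them. First I would argue finiteness: the locus of horizontal $\PP^3$-s meeting $Z$ in a positive-dimensional set should be a proper subvariety of the six-dimensional space of horizontal $\PP^3$-s (Proposition \ref{basics}). To see this I would set up the incidence variety $\{(P,z)\mid P \text{ horizontal}, z\in P\cap Z\}$ and project to $Z$; by Proposition \ref{basics} the horizontal $\PP^3$-s through a fixed point form a $\PP^3$, so the fiber over each $z\in Z$ is three-dimensional, giving the incidence variety dimension $3+3=6$, equal to the dimension of the family of horizontal $\PP^3$-s. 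Hence the generic $P$ meets $Z$ in a zero-dimensional set, and a standard dimension-count shows the fibers that are positive-dimensional lie over a proper closed subset.

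Next I would handle smoothness and transversality at the generic point. Since $Z$ is irreducible of dimension three, its singular locus $\Sigma$ has dimension at most two; the horizontal $\PP^3$-s meeting $\Sigma$ form a proper subfamily (again by the incidence count, replacing $Z$ with $\Sigma$, whose contribution drops the incidence dimension below six), so the generic horizontal $P$ avoids $\Sigma$ entirely and meets $Z$ only at smooth points. At a smooth point $z$ the tangent space $T_zZ$ is a three-dimensional subspace of the six-dimensional $T_zQ_6$, and transversality with the three-dimensional $T_zP$ fails only on a proper condition; applying a Bertini-type genericity argument, or directly arguing that the generic member of the three-parameter family of horizontal $\PP^3$-s through a neighborhood meets $T_zZ$ transversely, rules out tangency for generic $P$. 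Once every intersection point is a transverse intersection of smooth points, each contributes exactly $+1$ to the intersection number $p$ (by \cite[Proposition 8.2]{Fulton}, as in Lemma \ref{keytrick}), forcing the count to be exactly $p$ disjoint smooth points.

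The main obstacle I anticipate is the transversality/smoothness step rather than the finiteness step. Finiteness follows cleanly from the incidence dimension count, but proving that the generic horizontal $\PP^3$ is simultaneously disjoint from $\mathrm{Sing}(Z)$ \emph{and} transverse to $Z$ at its intersection points requires care: one must verify that the three-dimensional family of horizontal $\PP^3$-s through a given smooth point $z$ is genuinely ``spread out'' inside $T_zQ_6$, i.e. that these tangent $3$-planes are not all forced to lie in a special position relative to $T_zZ$. I expect this to reduce to the fact, implicit in Proposition \ref{basics}, that the horizontal $\PP^3$-s through $z$ sweep out directions filling the tangent cone appropriately, so that a generic one is transverse; making this rigorous, perhaps via a Bertini argument applied to the incidence correspondence, is where the real work lies.
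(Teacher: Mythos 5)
Your proposal is correct and follows essentially the same route as the paper: the paper's proof is exactly your incidence correspondence (a $\PP^3$-bundle over $Z$, hence six-dimensional) projected to the six-dimensional space of horizontal $\PP^3$-s, with nonemptiness of fibers from $p>0$ giving surjectivity and hence generic finiteness. The transversality step you flag as ``where the real work lies'' is dispatched in the paper in one line by generic smoothness in characteristic zero --- the generic fiber of this projection consists of smooth points --- so no separate verification that the tangent $3$-planes through a point are suitably spread out is actually needed.
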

\begin{proof}
Consider the space of pairs $(z, P)$ where $z\in Z$ and $P$ is a horizontal $\PP^3$ that contains
$z$. It is a $\PP^3$-bundle over $Z$, so it is of dimension $6$. Consider the projection
to the space of all horizontal $\PP^3$. Since $p>0$, the fibers of this projection 
are nonempty. Since it is a map between projective varieties of the same 
dimension (six), it is generically finite.
The generic fiber will consist of smooth points, which means that the generic horizontal $\PP^3$, call it $H$, intersects $Z$ transversely at smooth points. 
By the definition of bidegree, $H$ will then necessarily intersect $Z$ 
at $p$ disjoint points.
\end{proof}
The following construction will also be used in the proof of our main theorem. 
\begin{lemma}
\label{Bl}
Let $H$ be a horizontal $\PP^3$, and consider the blowup ${\rm{Bl}}(Q_6, H)$ 
of $Q_6$ along $H$. 
Then ${\rm{Bl}}(Q_6, H)$ is naturally identified with a bundle
$\PP^3 \rightarrow B \overset{\pi}{\rightarrow} \PP^3 = H^*$, which is defined 
by specifying that a fiber of this bundle is the vertical
$\PP^3$ that intersects $H$ at a given $\PP^2 \in H^*$ (see Proposition \ref{basics}). 
A point $z \notin H$ is mapped to the $2$-plane $Ann(z) \cap H$. 
This map extends smoothly to the exceptional 
divisor, which is the projectivized normal bundle of $H$.  
The image of the exceptional divisor over a point $z \in H$ 
consists of all vertical $2$-planes which intersect 
$H$ in a $\PP^2$ containing $z$. 
\end{lemma}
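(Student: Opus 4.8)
The plan is to realize the assignment $z \mapsto \Ann(z) \cap H$ as the restriction to $Q_6$ of a single linear projection of $\PP^7$, and then to recognize the blowup as the variety that resolves it. Working in the coordinates of Remark \ref{explicit}, take $H = \{x_4 = x_6 = x_7 = x_8 = 0\}$. A direct computation with the bilinear form shows that for $z = (x_1 \colon \cdots \colon x_8)$ the hyperplane $\Ann(z)$ meets $H$ in the $\PP^2$ cut out, inside $H$ with coordinates $(y_1 \colon y_2 \colon y_3 \colon y_5)$, by the linear form $x_8 y_1 - x_7 y_2 + x_6 y_3 - x_4 y_5$. Hence, identifying $H^* = \PP^3$ with the complementary coordinate space, the rational map $\phi \colon Q_6 \dashrightarrow H^*$, $\phi(z) = \Ann(z)\cap H$, is exactly the restriction to $Q_6$ of the linear projection $(x_1 \colon \cdots \colon x_8) \mapsto (x_8 \colon -x_7 \colon x_6 \colon -x_4)$, whose center of indeterminacy is precisely $H$. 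This is the crux: projection from a $\PP^3$ center in $\PP^7$ is resolved exactly by blowing up that center, and since $H \subset Q_6$ the blowup ${\rm Bl}(Q_6, H)$ is the proper transform of $Q_6$ in ${\rm Bl}(\PP^7, H)$, so the projection lifts to a genuine morphism $\pi \colon {\rm Bl}(Q_6, H) \to H^*$ extending $\phi$. On $Q_6 \setminus H$ this morphism is $z \mapsto \Ann(z) \cap H$ by construction, which is the asserted description of the map away from $H$.

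Next I would read off the fibers. The map ${\rm Bl}(\PP^7, H) \to H^*$ is a projective bundle whose fiber over $L \in H^*$ is the proper transform of the $\PP^4$ spanned by $H$ and the point of $H^*$ corresponding to $L$; that $\PP^4$ is exactly $\Ann(L)$, since $\Ann(z) \cap H = L$ forces $z \in \Ann(L)$. The fiber of $\pi$ over $L$ is then the proper transform of $Q_6 \cap \Ann(L)$. Here I would use that $L$ lies in the radical of the restriction of the quadratic form to $\Ann(L)$: since $L \subset \Ann(L)$ is isotropic of projective dimension $2$, the form drops to rank $2$ on $\Ann(L) \cong \PP^4$ and factors as a union of two hyperplanes through $L$, one of which is $H$ and the other a $\PP^3$ that I call $V_L$. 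Because $V_L \cap H = L$ has even dimension, $V_L$ is of the opposite (vertical) type to $H$, so it is the unique vertical $\PP^3$ through $L$ given by Proposition \ref{basics}. Passing to proper transforms discards the component $H$ and leaves $V_L$ untouched, since $V_L \cap H = L$ is a hyperplane, hence a Cartier divisor, in $V_L \cong \PP^3$, so blowing up along it is an isomorphism. Thus the fiber of $\pi$ over $L$ is $\cong V_L \cong \PP^3$, and $\pi$ is a $\PP^3$-bundle whose fiber over $L$ is the vertical $\PP^3$ meeting $H$ in $L$, exactly as claimed.

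Finally I would treat the exceptional divisor. By definition of the blowup of the smooth center $H \subset Q_6$, the exceptional divisor $E$ is the projectivized normal bundle $\PP(N_{H/Q_6})$; and because $\pi$ is a morphism on all of ${\rm Bl}(Q_6, H)$, it restricts to a regular map on $E$, which is the asserted smooth extension. Under the incidence description ${\rm Bl}(Q_6, H) = \{(L, z) : z \in V_L\}$ furnished by the fibration, the preimage of $H$ is $E = \{(L, z) : z \in V_L \cap H = L\}$, the flag variety of pairs consisting of a point $z \in H$ and a plane $L \in H^*$ with $z \in L$. Projection to $H$ realizes $E$ as a $\PP^2$-bundle, consistent with $\PP(N_{H/Q_6})$, while $\pi$ sends the fiber over $z \in H$ to $\{L \in H^* : z \in L\}$, i.e.\ to all $\PP^2$'s in $H$ through $z$, equivalently all vertical $\PP^3$-s meeting $H$ in a $\PP^2$ containing $z$. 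This is the final assertion of the lemma.

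The step I expect to be the main obstacle is the passage from a proper map all of whose fibers are $\cong \PP^3$ to an honest $\PP^3$-bundle, together with the scheme-theoretic bookkeeping of the proper transforms: one must check that $Q_6 \cap \Ann(L)$ is the \emph{reduced} union $H \cup V_L$ with no embedded or multiple structure, and that the proper transform of $V_L$ is reduced and meets each ambient fiber in a $\PP^3$. The cleanest way around this is to stay inside ${\rm Bl}(\PP^7, H)$, where the $\PP^4$-bundle structure over $H^* = \PP^3$ is explicit (it is the projectivization of a rank-five vector bundle on $\PP^3$), and to exhibit $V_L$ as a sub-$\PP^3$-bundle cut out fiberwise by a single linear condition; the local triviality of $\pi$ then follows from that of the ambient bundle, without any appeal to generic-smoothness arguments.
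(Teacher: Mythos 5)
Your proposal is correct and takes essentially the same route as the paper: both resolve the projection $z \mapsto \Ann(z)\cap H$ by passing to ${\rm Bl}(\PP^7,H)\to \PP^3$, identify ${\rm Bl}(Q_6,H)$ with the proper transform of $Q_6$ there, and use the key fact that each $\PP^4$ containing $H$ meets $Q_6$ in the union of $H$ and a vertical $\PP^3$ intersecting $H$ in a $\PP^2$. The paper dismisses the remaining verifications as ``easy,'' while your explicit coordinates, rank-two computation on $\Ann(L)$, and fiberwise-linear sub-bundle argument simply supply those details.
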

\begin{proof}
It is well-known that the blowup ${\rm{Bl}}(\PP^7,H)$ of $\PP^7$ at $H$ maps to 
$\PP^3$ that parametrizes the $\PP^4$-s in $\PP^7$ that contain $H$. 
The blowup of $H$
in $Q_6$ is the proper preimage of $Q_6$ in ${\rm{Bl}}(\PP^7,H)$, so 
it projects naturally to $\PP^3$. Note that for each $\PP^4$ that 
contains $H$, its intersection with $Q_6$ is the union of $H$ and a 
vertical $\PP^3$ that intersects $H$ at a $\PP^2$. The lemma follows 
easily from these facts.  
\end{proof}
\section{Proof of the main theorems}
\label{secmain}

\begin{theorem}\label{smallpinP5}
Let $Z$ be a subvariety of $Q_6$ of bidegree $(1,p)$. Let us further assume 
that $Z$ is singular 
or that $Z$ is smooth and $p\leq 3$. Then for $p\neq 3$, $Z$ is contained  
in a $\PP^5$. For $p=3$, either $Z$ is contained in a $\PP^5$, or $Z$
is equivalent to the Veronese embedding \eqref{coneV} under an automorphism 
of $Q_6$. 
\end{theorem}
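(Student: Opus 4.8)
```latex
The plan is to use the geometric machinery developed in the preliminary section,
especially Lemma \ref{singular} and the blowup description of Lemma \ref{Bl}, to force
$Z$ into a small linear space. I would split the argument according to whether $Z$ is
singular or smooth.

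\textbf{The singular case.} Suppose $Z$ has a singular point $A$. By Lemma
\ref{singular}, $Z\subset \Ann(A)$, which is a $\PP^6$ (the annihilator of a point of
$Q_6$). So $Z$ lies in a $\PP\cong \PP^6$. By Lemma \ref{sing}, since $p\neq 1$, the
intersection $Q_6\cap \PP$ is a singular quadric $Q_5'$ of rank at most $6$. If the rank
is $6$, I expect $Q_5'$ to be a cone with a single vertex over a smooth $4$-quadric, and
$Z$ passes through or avoids that vertex; projecting away from the vertex, or slicing with
a further hyperplane, should either drop $Z$ into a $\PP^5$ or expose the special
geometry. The first hard sub-case is to rule out (or identify) the situation where $Z$
spans the full $\PP^6$; here the Veronese cone \eqref{coneV} is exactly the exceptional
example, and I would recognize it by the rank-$6$ quadric $\{x_8=0\}\cap Q_6$ being the
cone over $G(2,4)$, with $Z$ a cone over the Veronese surface $S$ inside $G(2,4)$. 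The
recognition step is to show that the projectivized tangent cone at the vertex, together
with the degree $p+1=4$ from Proposition \ref{degree}, pins down $Z$ up to
$\mathrm{Aut}(Q_6)$.

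\textbf{The smooth case with $p\leq 3$.} Here I cannot use a singular point, so I would
instead exploit Lemma \ref{Bl}. Blowing up a generic horizontal $\PP^3$, which by Lemma
\ref{horiz} meets $Z$ transversely in $p\leq 3$ points, gives the bundle
$\PP^3\to B\overset{\pi}{\to}\PP^3=H^*$. The $p$ intersection points produce $p$ sections
(or a $p$-valued section) of this bundle, and I would translate the condition ``$Z$ spans
more than a $\PP^5$'' into a numerical condition on these sections that is incompatible
with $p\leq 3$. Concretely, the span of $Z$ being a $\PP^6$ means the annihilator of
$\Span(Z)$ is a line $\PP^1$; I would test its isotropy. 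If that $\PP^1$ is isotropic then
$Z$ actually lies in a $\PP^5$ of the type in Proposition \ref{classifyinQ4}, and if it is
not isotropic the $\PP^5$ carries a nondegenerate quadric and $Z$ is a quadric
threefold forcing $p=1$, already excluded for $p\neq 1$.

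\textbf{Main obstacle.} I expect the genuinely hard step to be the $p=3$ dichotomy:
distinguishing when $Z$ collapses into a $\PP^5$ from when it is forced to be the Veronese
cone \eqref{coneV} spanning a $\PP^6$. The degree-counting and homology arguments
(Propositions \ref{degree}, \ref{basics} and Lemma \ref{keytrick}) give strong
constraints, but converting ``$Z$ spans $\PP^6$ and has bidegree $(1,3)$'' into the
explicit parametrization requires identifying the rank-$6$ hyperplane section with the
cone over $G(2,4)$ and then matching $Z$ with a cone over a bidegree $(1,3)$ surface in
$G(2,4)$; the classification of order-one surfaces in $G(2,4)$ (the Veronese, via
\cite{Ran}) then forces $Z$ to be \eqref{coneV}. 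Making this last identification
canonical, rather than merely up to the coarse geometric description, is where I would
spend the most care.
```
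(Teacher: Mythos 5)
Your proposal correctly identifies several ingredients of the paper's argument (Lemma \ref{singular} to place singular $Z$ inside $\Ann(A)\cong\PP^6$, Lemma \ref{sing} to see that $Q_6\cap\PP^6$ is a cone over $G(2,4)$, and the classification from \cite{Ran} of order-one surfaces in $G(2,4)$ to recognize the Veronese cone), but it has two genuine gaps. First, the backbone of the actual proof is missing: by Proposition \ref{degree} the threefold $Z$ has degree $p+1$, and an irreducible nondegenerate threefold of degree $p+1$ spans at most a $\PP^{p+3}$. This one line settles every case with $p\leq 2$ (smooth or singular) and puts a smooth $Z$ with $p=3$ into a $\PP^6$; it also covers the singular $p=1$ case, which your argument misses entirely, since Lemma \ref{sing} requires $p\neq 1$. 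Your substitute for the smooth case --- blowing up a horizontal $\PP^3$ as in Lemma \ref{Bl} and extracting an unspecified ``numerical condition on sections incompatible with $p\leq 3$'' --- is not an argument as it stands, and the concrete test you propose is dimensionally wrong: the annihilator of a $\PP^6$ in $\PP^7$ is a point, not a $\PP^1$ (the isotropic-$\PP^1$ dichotomy you describe pertains to $\Span(Z)$ being a $\PP^5$, which is the logic of the proof of Theorem \ref{main}, not of this theorem).

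Second, and more seriously, once $Z$ lies in $\Ann(A)$, you implicitly assume that if $Z$ spans the $\PP^6$ then it is a cone with vertex $A$ over a bidegree $(1,p)$ surface in $G(2,4)$. That is precisely what must be proved. The projection of $Z\setminus A$ from $A$ into $G(2,4)$ can a priori have three-dimensional image $Y$, and this non-cone case is where the real work of the paper's proof lies: $Y$ is the zero set of a section of ${\mathcal L}=\pi^*\OO_{\PP^5}(d)$, and one shows $d=1$ by intersecting with generic horizontal $\PP^3$-s of the form $\Span(H,A)$ with $H=\PP^2\subset G(2,4)$; if $d>1$ a transversality argument produces a curve of degree larger than one in $\Span(H,A)\cap Z$, contradicting Lemma \ref{withhp}. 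Only after this step can one conclude the dichotomy: either $Z\subset\PP^5$ (the threefold-image case with $d=1$, and also the case when the image surface lies in a hyperplane of $\PP^5$), or $Z$ is the cone over the Veronese surface, i.e.\ \eqref{coneV}. Without it, your claimed dichotomy is unproven both for $p=3$ and for singular $Z$ with $p>3$, for which your sketch offers only the phrase ``should either drop $Z$ into a $\PP^5$ or expose the special geometry.''
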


\begin{proof}
From Proposition \ref{degree}, $\deg(Z)=p+1$,
consequently, $Z$ is contained in a $\PP^{p+3}$,  see \cite[page 174]{GH}.
This implies the statement for $p<3$. 
For $p=3$ similarly $Z$ is contained in a $\PP^6$.
If $Z$ is singular then $Z$ is contained in a $\PP^6$ by Lemma \ref{singular}.
In either case, by Lemma \ref{sing}, there is a point $A$ in $Q_6$ such 
that this $\PP^6$ is $\Ann(A)$ and 
$Q_6\cap \PP^6$ is a cone with vertex $A$ over the smooth quadric in $\PP^5$, 
which we will identify with the 
Grassmannian $G(2,4)$. Consider the projection from $Z \setminus A$ 
to $G(2,4)$ from the point $A$. 

First, consider the case when the image is a threefold $Y$ (if $A$ is 
contained in $Z$ then we consider the closure of the image of $Z \setminus A$). 
The divisor $Y$ is a zero set of a section of a line bundle ${\mathcal L}=\pi^*{\mathcal
O}_{\PP^5}(d)$ for some $d>0$, where $\pi\colon G(2,4)\to\PP^5$ is the Pl\"ucker
embedding. 
If $d > 1$, for any smooth point $x\in Y$ consider the space
\begin{align*}
W_x \equiv \{ H = \PP^2 \subset G(2,4) \mbox{ with } x \in H 
\mbox{ and } {\rm Span}(H,A) \mbox{ is a horizontal } \PP^3 \}.
\end{align*}
The span of the tangent spaces at $x$ 
of all such $H \in W_x$ 
is the entire $T(x,G(2,4))$. As a result, a generic $H$ will have a point
$x\in H\cap Y$ such that $T(x,H)$ that does not lie in $T(x,Y)$, and thus 
$H$ and $Y$ will intersect transversely near $x$ inside $G(2,4)$. Consequently, 
the restriction to $H$ of the
section of $\mathcal L$ whose zero set is $Y$, is smooth at $x\in H$. On the other
hand, the pullback of $\mathcal L$ to $H$ is ${\mathcal O}_H(d)$, which shows that the
set-theoretic intersection of $Y$ and $H$ is a (perhaps reducible) curve $C$ 
of total degree strictly larger than one. From transversality at $x$, 
we know that $Y$ does not intersect $H$ at a multiple of a 
single line, thus $C$ is not a line. Since $H$ is generic, 
we may assume that $P={\rm Span}(H,A)$ intersects
$Z$ at a curve $\tilde{C}$, rather than a surface (and perhaps some isolated points). 
Since the curve $\tilde{C}$ covers $C$ under projection, 
it cannot be a line either. This contradicts Lemma~\ref{withhp},  therefore $d = 1$
and $Y$ is contained in a $\PP^4$, so $Z$ is contained in a $\PP^5$ as claimed.
 
Otherwise, the image of $Z$ must be a surface $S$, and $Z$ must be a cone 
over that surface with vertex $A$. 
Since $Z$ is irreducible, this surface must be an irreducible surface 
of bidegree $(1,p)$ in $G(2,4)$.
From \cite{Ran}, $S$ is either of the form $S(M,F)$ which is 
by definition contained in the hyperplane in $\PP^5$ determined 
by the line $M$, or $S$ is equivalent under an automorphism of $G(2,4)$
to the $(1,3)$ surface given by the secant line image of the twisted cubic. 
Under the Pl\"ucker embedding $G(2,4) \subset \PP^5$, 
the image of this surface is the Veronese surface (\ref{Veronese}). 
To see this, the (affine part of the) rational normal cubic is the set of points 
$\{ (1\colon t\colon t^2\colon t^3) \ | \ t\in \CC \} \subset \PP^3$, so we can parametrize 
a dense open subset of the  surface $S$
by 
\begin{align*}
\{ (1\colon t\colon t^2\colon t^3)\wedge (1\colon s\colon s^2\colon s^3)
\colon (s,t)\in\CC^2 \},
\end{align*}
which gives 
\begin{align*}
(x_2\colon \ldots \colon x_7) &=
(p_{12}\colon p_{13}\colon p_{23}\colon p_{14}\colon p_{24}\colon p_{34})\\
&=
(s-t\colon s^2-t^2\colon 
ts^2-st^2\colon 
s^3-t^3 \colon 
ts^3-st^3\colon t^2s^3-s^2t^3)\\
&= (1\colon s+t\colon 
ts\colon s^2+st+t^2\colon 
ts(s+t)\colon t^2s^2).
\end{align*}
This can then be parametrized in terms of the symmetric functions 
$(u_1,u_2)=(s+t,st)$ as
\begin{align*}
 (1\colon u_1\colon u_2 \colon u_1^2-u_2 \colon u_1u_2\colon u_2^2).
\end{align*}
Finally, we homogenize to obtain (\ref{Veronese}),
and observe that any automorphism of $G(2,4)$ extends to an 
automorphism of $Q_6$ which maps $A$ to the point 
$(1 \colon 0 \colon \ldots \colon 0)$.   
\end{proof}
The following key result allows us to deal with smooth
$Z$ for $p>3$.
\begin{theorem}\label{p4inP5}
If $Z$ is a smooth subvariety in $Q_6$ of bidegree $(1,p)$ with $p>3$ 
then $Z$ is contained in a $\PP^5$.
\end{theorem}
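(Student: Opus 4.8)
The plan is to prove that a smooth $Z$ of bidegree $(1,p)$ with $p>3$ must have degenerate span, by exploiting the geometry of a generic horizontal $\PP^3$ together with the blowup structure from Lemma \ref{Bl}. First I would fix a generic horizontal $\PP^3$, call it $H$, which by Lemma \ref{horiz} meets $Z$ transversely in exactly $p$ smooth points $z_1,\dots,z_p$. The key idea is to study the projection $\pi\colon B={\rm Bl}(Q_6,H)\to H^*=\PP^3$ of Lemma \ref{Bl}, whose fibers are the vertical $\PP^3$-s meeting $H$ in a fixed $\PP^2$. By Lemma \ref{keytrick}, each such vertical fiber meets $Z$ either transversely in a single point or in a positive-dimensional set, so $Z$ maps to $H^*$ generically one-to-one. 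I would then examine how the $p$ intersection points $z_1,\dots,z_p$ on $H$ are distributed relative to the annihilator structure, using that $\Ann(z_i)\cap H$ is a $\PP^2$ for each $i$.

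The heart of the argument should be a dimension count on incidence varieties of horizontal $\PP^3$-s and the secant/tangent geometry of $Z$, in the spirit of the proofs of Lemma \ref{withhp} and the earlier surface classification in Theorem \ref{smallpinP5}. I would set up the space of pairs $(H,z)$ with $z\in Z\cap H$ and use that $Z$ is smooth to control the transversality, then argue that if the span of $Z$ were all of $\PP^7$ (or a $\PP^6$), the number of constraints imposed on generic horizontal $\PP^3$-s forces $p$ to be small. Concretely, if $Z$ spans $\PP^7$ then $Z$ is not contained in any $\Ann(A)=\PP^6$; by Lemma \ref{singular} this is consistent with $Z$ smooth, so the degenerate-span conclusion cannot come from a singular point and must instead come from a global incidence/degree estimate. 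The natural route is to bound $p$ by relating it to the degree of the image of $Z$ under projection from a suitable linear center, and showing that a threefold spanning more than a $\PP^5$ would force either a non-transverse vertical $\PP^3$ violating Lemma \ref{keytrick}, or a horizontal $\PP^3$ meeting $Z$ in a curve of degree $>1$ violating Lemma \ref{withhp}.

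The main obstacle I anticipate is ruling out the intermediate case where $Z$ spans exactly a $\PP^6$ but is \emph{smooth}, since the cone construction \eqref{coneV} (which does span a $\PP^6$) is singular at its vertex and so is excluded for smooth $Z$ only indirectly. For a smooth $Z$ spanning a $\PP^6$, Lemma \ref{sing} tells us that $Q_6\cap\PP^6$ must be singular, hence $\PP^6=\Ann(A)$ for some point $A\in Q_6$ and $Q_6\cap\PP^6$ is a cone with vertex $A$ over $G(2,4)$. I would then project from $A$ to $G(2,4)$ exactly as in Theorem \ref{smallpinP5}: smoothness of $Z$ forces $A\notin Z$ and the projection to be a morphism onto a threefold $Y\subset G(2,4)$, cut out by a section of $\pi^*\OO_{\PP^5}(d)$. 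The transversality argument using the spaces $W_x$ from Theorem \ref{smallpinP5} then shows $d=1$ unless the intersection with a generic horizontal $\PP^3$ is a curve of degree $>1$, contradicting Lemma \ref{withhp}; thus $Y\subset\PP^4$ and $Z$ lands in a $\PP^5$, completing the reduction.

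The remaining and genuinely new work for $p>3$ is to show $Z$ cannot span all of $\PP^7$. Here I would argue that spanning $\PP^7$ forces the generic vertical $\PP^3$ to meet $Z$ transversely in a single point (by Lemma \ref{keytrick}), giving a dominant generically finite map $Z\to$ (the six-dimensional quadric of vertical $\PP^3$-s) of degree one; tracing this birational structure back through the blowup $B$ of Lemma \ref{Bl} and comparing with the $p$-fold covering of $H^*$ by horizontal $\PP^3$-s should pin down the possible classes and yield $p\le 3$, contradicting the hypothesis. I expect the delicate point to be the bookkeeping that converts these incidence dimension counts into a sharp numerical bound on $p$, and I would lean on the intersection-theoretic positivity of \cite[Proposition 8.2]{Fulton} (already invoked in Lemma \ref{keytrick}) to guarantee that every excess contribution is strictly positive, so that the order-one hypothesis is genuinely rigid.
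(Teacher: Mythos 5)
Your setup (a generic horizontal $H$ meeting $Z$ transversely in $p$ points $z_1,\dots,z_p$, the blowup of Lemma \ref{Bl}, and the observation via Lemma \ref{keytrick} that the induced map to $H^*=\PP^3$ is generically one-to-one) matches the paper's, and your handling of the intermediate case where $Z$ spans exactly a $\PP^6$ --- Lemma \ref{sing}, projection from the vertex $A$, and the $W_x$-transversality argument of Theorem \ref{smallpinP5} forcing $d=1$, with smoothness excluding the cone case --- is essentially correct (though smoothness does not force $A\notin Z$; it only forces $Z$ not to be a cone with vertex $A$, which is all the argument needs). But the core of the theorem, ruling out span equal to $\PP^7$, is missing. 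Your paragraph on that case contains no argument: the proposed ``dominant generically finite map $Z\to$ (the six-dimensional quadric of vertical $\PP^3$-s) of degree one'' is dimensionally impossible, since $Z$ is a threefold and the target is six-dimensional, and ``comparing with the $p$-fold covering \dots should pin down the possible classes and yield $p\le 3$'' is precisely the statement to be proved, not a proof. Note also that incidence counting alone cannot suffice: the Veronese cone \eqref{coneV} spans a $\PP^6$ and satisfies Lemmas \ref{keytrick} and \ref{withhp}, so these lemmas cannot be ``violated'' merely because the span is large; smoothness must enter in an essential, quantitative way, and nothing in your outline uses it beyond transversality.

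The paper's mechanism, which you would need to supply, is a canonical-class computation on the blowup. Writing $Y$ for the proper transform of $Z$ (the blowup of $Z$ at the $z_i$, with exceptional planes $E_i\cong\PP^2$), the map $\pi\colon Y\to\PP^3$ is birational between smooth varieties, so its fibers are connected by Zariski's Main Theorem; since every point of the line $l_{ij}=\pi(E_i)\cap\pi(E_j)$ corresponds to a vertical $\PP^3$ through both $z_i$ and $z_j$, connectedness produces, for each $j\neq i$, a $\pi$-exceptional divisor $D_{ij}$ mapping onto $l_{ij}$ and meeting $E_i$ in a curve. Restricting the discrepancy formula $K_Y=\pi^*K_{\PP^3}+\sum_r a_rD_r$ (with $a_r\geq 1$ because $Y$ is smooth --- this is where smoothness of $Z$ and the transversality of $H\cap Z$ are used) to $E_i$ gives $-2l=-4l+\sum_r a_rD_r\vert_{E_i}$, so the $\pi$-exceptional divisors cut $E_i$ in total degree at most two; hence at most two distinct lines $\Span(z_i,z_j)$ pass through each $z_i$. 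For $p>3$ this combinatorics forces all the $z_i$ onto a single line $L$, and one concludes by projecting $Z$ from $L$: the closure of the image in the $\PP^5$ of $2$-planes through $L$ is three-dimensional of degree one (a lower-dimensional image or higher degree would contradict $\deg Z=p+1$ from Proposition \ref{degree}), i.e.\ a $\PP^3$, so $Z$ lies in the corresponding $\PP^5$. This discrepancy bound, the collinearity conclusion, and the projection from $L$ constitute the actual content of the proof, and none of them appears in your proposal.
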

\begin{proof}
We take a generic $H$ as in Lemma \ref{horiz}, which intersects $Z$ at 
$p$ disjoint points, call these points $\{z_1,\ldots,z_p \}$.
Consider ${\rm{Bl}}(Q_6, H)$, and the corresponding proper preimage $Y$ of $Z$.
The blowup of $Q_6$ at $H$ induces a map $Y\to Z$ which is the blowup of 
$Z$ at the scheme-theoretic
intersection of $H$ and $Z$. Since $H$ intersects $Z$ transversely, 
$Y\to Z$ is the blowup 
of $Z$ at the points $z_i$.  The exceptional divisors 
$E_i$ of $Y\to Z$ are isomorphic to $\PP^2$. Consider the restricted 
projection map $\pi\colon Y\to \PP^3$. 
As noted in Lemma \ref{Bl}, it maps each $E_i$ isomorphically
to a $\PP^2$ in $\PP^3$ that could be identified with the space of 
vertical $\PP^3$-s that intersect $H$ in a $\PP^2$ containing $z_i$.

The fibers of $\pi$ are nonempty, so the generic fiber is smooth of dimension zero, 
so it is a point by Lemma \ref{keytrick}.
Consequently, $\pi\colon Y\to \PP^3$ is a birational map of smooth varieties, 
which implies that its fibers are connected
by Zariski's Main Theorem \cite[Corollary III.11.4]{Hartshorne}. 
Consider the exceptional divisors of $\pi$ on $Y$. 
We claim that for each pair $(i,j), i\neq j$
there is an exceptional divisor $D_{ij}$ of $\pi$ which is contracted to the 
line $l_{ij}=\pi(E_i)\cap \pi(E_j)$.
Indeed, for every point in $l_{ij}$ the corresponding vertical $\PP^3$ that 
intersects $H$ at a $\PP^2$
contains both $z_i$ and $z_j$. By Lemma \ref{keytrick}, the fiber over this 
point is of dimension at least one.
Because the fibers of $\pi$ are connected, for each $j\neq i$ there is an exceptional
divisor $D_{ij}$ of $\pi$ which maps to $l_{ij}$ and intersects $E_i$ at a curve.

We next consider $K_Y$, the canonical class of $Y$.
Since $\pi$ is a birational map, we have the equivalence 
\begin{align}
\label{KM}
K_Y = \pi^*K_{\PP^3} + \sum_r a_r D_r,
\end{align}
for the exceptional divisors $D_r$ of $\pi$,
where the discrepancies $a_r$ are the orders of vanishing
of the Jacobian, and hence are positive integers.
Since $E_i$ is obtained by blowing up a point on a 
smooth threefold, we have $K_Y\vert E_i = -2l$ where $l$ is the class of a line on $E_i$.
Restricting (\ref{KM}) to $E_i$, we then have
\begin{align*}
-2l = K_Y\vert_{E_i} =-4l+ \sum_r a_r D_r\vert_{E_i}.
\end{align*}
This shows that the total degree of intersection of $D_r$ with $E_i$ is
at most two. However, for all $j\neq i$, the divisors $D_{ij}$ intersect $E_i$, and
the number of these divisors is at least $p-1>2$. This means that for each $i$ the number
of distinct divisors $D_{ij}$ is at most two. As a consequence, the number of distinct lines
$l_{ij}$ is at most two. This can be rephrased as saying that for each $i$ the number of distinct
lines $\Span(z_i,z_j)$ in $H$ is at most two. 

We claim that this means that all $z_i$ lie in a line. Indeed, consider the line that 
contains the maximum number of $z_i$. If it contains only two $z_i$, then there are
strictly more than two lines through $z_1$, since $p>3$. Else, 
if there is a point $z_j$ 
not on this line, then there are at least three lines through that $z_j$. 
In either case, there is a point with at least $3$ distinct lines
passing through it, contradicting the conclusion from the preceeding 
paragraph. Denote the line containing all the $z_i$ by $L$.

Let us now consider the projection of $Z_1=Z - \cup_iz_i$ from the line $L$.
That is, we let $W_1 \cong \PP^5$ denote the space of $\PP^2$-s in $\PP^7$ 
containing $L$, and to each point $x$ in $Z_1$ we associate the $\PP^2 \in W_1$ 
which is the span of $x$ and $L$.
We claim that the 
dimension of the image of $Z_1$ is three. Indeed, the 
dimension of the image is at most three. If the dimension is strictly 
less than three, then a generic $\PP^2 \in W_1$, call it $Q$, 
does not intersect the image of $Z_1$. 
The subspace $Q$ is a $\PP^2$-family of
$\PP^2$-s, each of which contains $L$.
The image of $Z_1$ under the projection is all the $\PP^2$-s containing $L$ 
which hit a point of $Z_1$. This means that each
$\PP^2$ in our $\PP^2$-family of $\PP^2$-s does not hit any
point of $Z_1$. The union of all of these $\PP^2$-s
is a $\PP^4$. This $\PP^4$ therefore only hits $Z$ on $L$, 
that is, this $\PP^4$ hits $Z$ exactly at
the points $z_i$, with transverse intersection since $Q$ is generic.
This shows that the degree of $Z$ in $\PP^7$ is $p$. However, this degree is 
$p+1$, see Proposition \ref{degree}. This contradiction proves that 
the image of $Z_1$ is of dimension three.

Next, we claim that the degree of the closure of the 
image of $Z_1$ is one. We choose a $\PP^2 \in W_1$, 
call it $Q$, which hits $\Im(Z_1)$ transversely in $k$ points, 
where $k$ is the degree of $\overline{\Im(Z_1)} \subset \PP^5$. 
This means that the number of $\PP^2$-s in $Q$ 
which intersect $Z_1$ is exactly $k$. The corresponding 
$\PP^4 \subset \PP^7$ then intersects $Z$ in
$p + r k$ points, where $r$ is the degree of
the projection map.
We must therefore have $p + r k = p +1 $, which implies that 
$k = 1$. 
Consequently, $\overline{\Im(Z_1)}$ is a $\PP^3$ in $W_1$, 
which implies that $Z$ is contained in the corresponding 
$\PP^5$ in $\PP^7$.
\end{proof}
\begin{remark} In the proof of Theorem \ref{mainsmooth} below, 
we will see that the case considered above in 
Theorem \ref{p4inP5} does not actually occur. 
Furthermore, it would not be difficult to extend the above proof 
to cover the singular case. However, it is more convenient
to quote the result from \cite{Ran} to deal with this case, 
as we have done in the proof of Theorem \ref{smallpinP5} above. 
\end{remark}
We are now ready to prove our main results.
\begin{proof}[Proof of Theorem \ref{main}.]
By Theorems  \ref{smallpinP5} and \ref{p4inP5}, the variety  $Z$ is either
given by \eqref{coneV} or is contained inside a $\PP^5\subset \PP^7$. If $p=1$, then it 
is contained in a $\PP^4$ in $\PP^7$. This $\PP^4$ intersects $Q_6$ in a dimension 
three quadric of rank at least two. However, rank two is not 
possible since this intersection is the union of a 
horizontal and a vertical $\PP^3$, which is reducible. 
The rank could then be anywhere from $3$ to $5$. When the rank is 
three or four, the annihilator of $\Span(Z)$
contains an isotropic line, in which case $Z$ can be moved inside 
$Q_4$ by an automorphism of $Q_6$, since the automorphism 
group acts transitively on isotropic lines. If the rank is five, than $Z$ 
is of the second case listed in Theorem \ref{main}.

If $p\neq 1$ and $Z$ is contained inside a $P\cong \PP^5\subset \PP^7$, 
then $Q_6\cap P$ is a singular quadric
of rank $4$. To see this, the rank of the quadratic form 
restricted to $P$ can be either $4, 5,$ or $6$.  
If the rank is $5$ or $6$, then consider $\Ann(P)\cong \PP^1$. 
Rank $4$ is equivalent to $\Ann(P) \subset Q_6$, so 
choose a point $x \in \Ann(P) \setminus Q_6$.  
For this point $x$, $Z\subset \Ann(x)\cong \PP^6$, and $\Ann(x)\cap Q_6$ is smooth, 
which contradicts Lemma \ref{sing}. Noting that all singular quadrics of rank 
four and dimension four in $Q_6$ can be moved to $Q_4$ by an automorphism 
of $Q_6$, the theorem follows from Proposition \ref{classifyinQ4}.
\end{proof}
We next find exactly which threefolds of order one are
smooth. The proof amounts to directly verifying which 
divisors in Proposition \ref{classifyinQ4} are smooth.  
\begin{proof}[Proof of Theorem \ref{mainsmooth}.]
For $p=0$ the statement is clear. For $p=1$ the only possibilities
for $Z$ are the quadrics of ranks $3$, $4$ and $5$, of which only
the quadrics of rank $5$ are smooth. All rank $5$ quadrics in
$Q_6$ are equivalent under the action of ${\rm Aut}(Q_6)$.
The third case in Theorem \ref{main} is singular at the cone point. 
Therefore we need only consider the last case in Theorem \ref{main}. 

We first consider the case $p>2$. We may assume that
$Q_4$ is given by 
\begin{align*}
Q_4 = \{x \in \PP^7 \ | \ x_7=x_8=0,x_3x_6=x_4x_5 \}, 
\end{align*}
and by Proposition \ref{classifyinQ4}
that $Z$ is a Weil divisor on $Q_4$ such that $Z+(p-1)D_1$ is given by
\begin{align}
\label{deq}
0=g_p(x_4,x_6)+\sum_{i=1,2,3,5}x_ig_{p-1}^{(i)}(x_4,x_6).
\end{align}
For any $(a,b)\in \CC^2 \setminus (0,0)$  consider the $\PP^3 \subset Q_4$ given by
\begin{align*}
P_{(a,b)} = \{ (u_1 \colon u_2 \colon u_3 a \colon u_4 a \colon u_3 b \colon u_4 b)
\ | \
(u_1 \colon u_2 \colon u_3 \colon u_4) \in \PP^3\},
\end{align*}
which contains the singular line of $Q_4$
\begin{align*}
L=\{ (u_1 \colon u_2 \colon 0 \colon \ldots \colon0)
\ | \
(u_1 \colon u_2) \in \PP^1 \}. 
\end{align*}
Clearly, $P_{(a,b)}$ depends only upon the class $(a \colon b) \in \PP^1$. 
The Weil divisor
$Z$ of class $(1,p)$ is a Cartier divisor on $Q_4 \setminus L$, hence
it restricts to a Cartier divisor on 
$P^{*}_{(a,b)} \equiv P_{(a,b)} \setminus L$. The groups of
Cartier divisors on $P_{(a,b)}$ and $P^{*}_{(a,b)}$ are isomorphic, and
it is easy to see that the class of $Z$ when restricted to
$P^{*}_{(a,b)}$ is ${\mathcal O}(1)$.

Explicitly, the equation (\ref{deq}) when restricted to $P_{(a,b)}$ 
can be written
\begin{align}
0=u_4^{p-1} \left( g_{p-1}^{(1)}(a,b) u_1 + g_{p-1}^{(2)}(a,b) u_2 +
  c_3 u_3  + c_4 u_4 \right), 
\end{align}
where $c_3$ and $c_4$ are constants. 
The factor $u_4^{p-1}$ amounts to the restriction of $D_1$ to 
$P^{*}_{(a,b)}$, 
so $Z \cap P^*_{(a,b)}$ is given by the 
second, linear, factor. 
Since $Z$ is irreducible, it can never contain a $\PP^3$. Hence,  
\begin{align}
\label{qp}
Q_{(a,b)} = \overline{ Z \cap P^*_{(a,b)}}
\end{align}
is a $\PP^2$ contained in $Z$.
This $\PP^2$ contains $L$ if and only if
both $g_{p-1}^{(1)}(a,b)$ and $g_{p-1}^{(2)}(a,b)$ are zero. 
If they are not both zero, then $Q_{(a,b)}$ passes through a
unique point on $L$ given by 
\begin{align}
\label{pointL}
Q_{(a,b)} \cap L = (-g_{p-1}^{(2)}(a,b) \colon g_{p-1}^{(1)}(a,b) \colon 0
\colon 0 \colon 0 \colon 0). 
\end{align}

We are now ready for the main argument.
View $g_{p-1}^{(1)}$ and $g_{p-1}^{(2)}$ as sections 
of $\mathcal{O}(p-1)$ on $\PP^1$.
If $g_{p-1}^{(1)}$ and $g_{p-1}^{(2)}$  have no common zeroes, 
then we can define the map $\psi \colon \PP^1 \rightarrow L $ by
\begin{align}
\label{psi}
\psi \big(  (a \colon b) \big) = 
\big( -g_{p-1}^{(2)}(a,b) \colon g_{p-1}^{(1)}(a,b) \colon 0 \colon 0
\colon 0 \colon 0 \big).
\end{align} 
The map $\psi$ has degree $p - 1$, and since $p - 1 > 1$, 
generically every value has at least two preimages.
Consequently, for a generic point $z \in L$, there exist two distinct 
$\PP^3$-s as above, $P_1 = P_{(a_1,b_1)}$ and $P_2 = P_{(a_2,b_2)}$, 
such that $z\in Q_{(a_1,b_1)}$ and 
$z\in Q_{(a_2,b_2)}$.  Since $P_1\cap P_2=L$, 
the corresponding $\PP^2$-s in (\ref{qp}) intersect transversely. 
If $Z$ were smooth at $z$ then $T_z Z$, 
the tangent space to $Z$ at $z$, would contain both planes, so its dimension 
would be at least four. This contradiction shows that $Z$ is singular at $z$. 

We next consider the cases where the map $\psi$ is not well-defined. 
If $g_{p-1}^{(1)}$ and $g_{p-1}^{(2)}$ are both zero, 
then $Z$ is a double cone over a $(1,p)$ curve in a
smooth $2$-quadric (see Remark \ref{dcone}), 
which is singular for $p \geq 1$. 
There remains the case that $g_{p-1}^{(1)}$ and $g_{p-1}^{(2)}$ 
have a common root but are not both zero. Denote 
this common root by $(a_0, b_0)$.
Consider the $\PP^3$ corresponding to this root, $P_{(a_0,b_0)}$,
and another $\PP^3 = P_{(a,b)}$ for some generic $(a_1,b_1)$ which is different 
from this root and has one of $g_{p-1}^{(1)}$ and $g_{p-1}^{(2)}$ nonzero. 
Denote by $z$ the intersection point of $Q_{(a_1,b_1)}$ 
and $L$ given in (\ref{pointL}). 
Since $P_{(a_0,b_0)}$ contains $L$, it also passes 
through $z$. We have found two
$\PP^2$-s contained in $Z$ and passing through the 
point $z$. 
We again argue that they are transversal. Indeed, the intersection of $P_{(a_0,b_0)}$ 
and $P_{(a_1,b_1)}$
is $L$, so the intersection of the two $\PP^2$-s is contained in $L$. 
However, $L$ is not contained in the second $\PP^2$, hence the two $\PP^2$-s through
$z$ intersect at $z$ only. 
As above, this gives a contradiction.
This finishes the proof that $Z$ is singular for $p > 2$. 

In the $p=2$ case, we can argue as in the preceeding 
paragraph to assume that $g_1^{(1)}$
and $g_1^{(2)}$ are both nonzero 
and have no common zeroes.
By a linear change of $x_1$ and $x_2$ (which easily
extends to an automorphism of $Q_6$ by an appropriate linear
change of $x_7$ and $x_8$) we may assume that 
the equation (\ref{deq}) has the form
\begin{equation}\label{almostSegre}
0=x_1x_6-x_2x_4- x_4 h_2(x_3,x_4,x_5,x_6) 
+ x_6h_1(x_3,x_4,x_5,x_6),
\end{equation}
where $h_1$ and $h_2$ are linear. We claim that $h_i$ 
can be absorbed into $x_i$ by a linear change of coordinates
which keeps $Q_4$ and $Q_6$ unchanged. If $h_1 = a_3 x_3 + a_4 x_4
+ a_5 x_5 + a_6 x_6$,  then we can use 
\begin{equation}\label{sym}
(x_1,\ldots,x_8)\to (x_1 - a_3x_3 -a_5x_5,
x_2,x_3,x_4 - a_5x_8,x_5,x_6 + a_3x_8,x_7,x_8),
\end{equation}
and similarly for $h_2$ to reduce \eqref{almostSegre}
on $Q_4$ to
\begin{align*}
0=x_1x_6-x_2x_4 - x_4 \tilde h_2(x_4,x_6) 
+ x_6\tilde h_1(x_4,x_6),
\end{align*}
and then use transformations analogous to \eqref{sym} to eliminate 
all of the extra terms.
This case is finished by observing that the 
equation $x_1 x_6 - x_2 x_4 = 0$ in $Q_4$
cuts out $D_1$ and the image of the Segre embedding,
so $Z$ is as claimed.  
\end{proof}
The technique in the above proof yields a geometric
description of the last case in Theorem \ref{main}.
\begin{proposition} If $Z$ is in the last case in Theorem \ref{main}, 
then $Z$ contains $L$ and is equal to the union of the $Q_{(a,b)}$ 
over all $(a \colon b) \in \PP^1$. This is almost a disjoint union, 
in the sense that the only intersections occur at points of $L$.
Furthermore, $Z$ is smooth away from $L$.
\end{proposition}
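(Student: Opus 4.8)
The plan is to push everything through the pencil of three-planes $P_{(a,b)}$ already introduced before Theorem~\ref{mainsmooth}; throughout I assume $p\geq 2$, since the subcase $p=1$ produces a quadric threefold spanning only a $\PP^4$ (the second case of Theorem \ref{main}), for which $Z\cap L$ is a single point rather than all of $L$. First I would record the purely coordinate-theoretic fact that a point of $Q_4$ lies in $P_{(a,b)}$ exactly when $b x_3=a x_5$ and $b x_4=a x_6$ (together with $x_7=x_8=0$), so these equations cut out $P_{(a,b)}$ as a $\PP^3\subset Q_4$; and since a point of $Q_4\setminus L$ satisfies $(x_3:x_5)=(x_4:x_6)$ by the equation $x_3x_6=x_4x_5$, it lies in a \emph{unique} $P_{(a,b)}$. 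Hence the $P_{(a,b)}$ sweep out $Q_4$ and partition $Q_4\setminus L$, with $P_{(a,b)}\cap P_{(a',b')}=L$ whenever $(a:b)\neq(a':b')$. Because $Q_{(a,b)}=\overline{Z\cap P^*_{(a,b)}}\subseteq Z$ (closure of a subset of the closed set $Z$) and $Z\cap P^*_{(a,b)}=Q_{(a,b)}\cap P^*_{(a,b)}$, intersecting $Z$ with this partition gives the disjoint decomposition $Z\setminus L=\bigsqcup_{(a:b)}\bigl(Q_{(a,b)}\setminus L\bigr)$. This already settles the almost-disjointness assertion: for $(a:b)\neq(a':b')$ the two planes meet only inside $P_{(a,b)}\cap P_{(a',b')}=L$.

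Next I would prove $L\subseteq Z$ and deduce the union assertion. By the analysis preceding \eqref{pointL}, each $Q_{(a,b)}$ meets $L$, either containing it (precisely when $g^{(1)}_{p-1}(a,b)=g^{(2)}_{p-1}(a,b)=0$) or meeting it in the single point $\psi(a:b)$ of \eqref{psi}. If $g^{(1)}_{p-1}$ and $g^{(2)}_{p-1}$ share a root $(a_0:b_0)$, then the linear form defining $Q_{(a_0,b_0)}$ loses its $u_1,u_2$ terms, so $Q_{(a_0,b_0)}\supseteq L$ and we are done; the double-cone case $g^{(1)}_{p-1}=g^{(2)}_{p-1}=0$ (Remark \ref{dcone}) is the extreme instance. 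Otherwise $g^{(1)}_{p-1},g^{(2)}_{p-1}$ are coprime, so $\psi\colon\PP^1\to L$ is a morphism of degree $p-1\geq 1$, hence surjective, and $L=\Im(\psi)=\bigcup_{(a:b)}\bigl(Q_{(a,b)}\cap L\bigr)\subseteq Z$. In every case $L\subseteq\bigcup_{(a:b)}Q_{(a,b)}\subseteq Z$; combined with $Z\setminus L\subseteq\bigcup_{(a:b)}Q_{(a,b)}$ from the first paragraph and $Z\cap L=L$, this gives $Z=\bigcup_{(a:b)}Q_{(a,b)}$.

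For smoothness away from $L$ I would work in the four affine charts $\{x_i\neq 0\}$, $i\in\{3,4,5,6\}$, which cover $Q_4\setminus L$ (whose singular locus is exactly $L$, so the charts are smooth). In the chart $U=\{x_5\neq 0\}\cong\CC^4$ of Proposition \ref{classifyinQ4}, substituting $(x_1,\dots,x_6)=(y_1,y_2,y_3,y_3y_4,1,y_4)$ into \eqref{weil} and dividing by the factor $y_4^{p-1}$ (the local equation of $(p-1)D_1$) shows that $Z\cap U$ is cut out by
\[
F=y_1\,g^{(1)}_{p-1}(y_3,1)+y_2\,g^{(2)}_{p-1}(y_3,1)+y_4\,g_p(y_3,1)+y_3\,g^{(3)}_{p-1}(y_3,1)+g^{(5)}_{p-1}(y_3,1),
\]
which is affine-linear in $(y_1,y_2,y_4)$ with coefficients polynomial in $y_3$. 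A singular point of $Z$ in this smooth chart must kill $dF$; affine-linearity forces the three coefficients $g^{(1)}_{p-1}(\alpha,1),g^{(2)}_{p-1}(\alpha,1),g_p(\alpha,1)$ to vanish at its value $y_3=\alpha$, and then $F=0$ forces the remaining $y_3$-term to vanish as well, so $F$ vanishes identically on the three-plane $\{y_3=\alpha\}$. That $\CC^3$ would then lie in $Z$, and by irreducibility $Z=\overline{\{x_3=\alpha x_5\}}\cap Q_4$, a hyperplane section of degree two, contradicting $\deg Z=p+1\geq 3$. Hence $Z\cap U$ is smooth, and the same substitution in the charts $\{x_3\neq 0\},\{x_4\neq 0\},\{x_6\neq 0\}$ (which play symmetric roles) shows $Z$ is smooth on all of $Q_4\setminus L$.

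I expect the main obstacle to be this last smoothness step: one must pin down the exact local equation $F$ in each chart, verify its affine-linearity in three of the four coordinates, and argue that a common zero of the linear coefficients cannot occur without forcing a full fiber three-plane into $Z$ and thereby violating irreducibility. The bookkeeping across the four charts and the check that they cover $Q_4\setminus L$ are the only genuinely technical points; by contrast the sweeping-out of $Q_4$, the disjointness, and the surjectivity of $\psi$ are essentially formal once the constructions preceding Theorem \ref{mainsmooth} are in hand.
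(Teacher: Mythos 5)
Your treatment of the first two assertions is essentially the paper's own argument: you partition $Q_4\setminus L$ into the $P^*_{(a,b)}$ to get $Z\setminus L=\bigcup_{(a:b)}\bigl(Q_{(a,b)}\setminus L\bigr)$ with intersections only along $L$, and you get $L\subseteq Z$ from the same dichotomy the paper uses (a common root of $g^{(1)}_{p-1},g^{(2)}_{p-1}$ forces some $Q_{(a_0,b_0)}\supseteq L$; otherwise $\psi$ is a morphism of degree $p-1\geq 1$, hence surjective onto $L$). Where you genuinely diverge is the smoothness claim. The paper disposes of it in one line via Lemma \ref{singular}: if $Z$ were singular at $A\notin L$, then $Z\subseteq\Ann(A)\cap Q_4$, which is the union of a vertical and a horizontal $\PP^3$, contradicting irreducibility and the bidegree $(1,p)$. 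You instead run a Jacobian computation in the four affine charts covering $Q_4\setminus L$; this is correct: the local equation $F$ is affine-linear in three of the coordinates, vanishing of $dF$ kills those three coefficients at $y_3=\alpha$, the equation $F=0$ then kills the remaining term, and the whole fiber $\{y_3=\alpha\}$ lands in $Z$. One slip in the final step: the closure of that fiber is the single $\PP^3$, namely $P_{(\alpha,1)}$, of degree one, not the full hyperplane section $\{x_3=\alpha x_5\}\cap Q_4$ --- that section is reducible (the union of $P_{(\alpha,1)}$ and $\{x_3=x_5=0\}$) and so cannot equal the irreducible $Z$ at all. The contradiction survives either way, since $Z=P_{(\alpha,1)}$ contradicts $\deg Z=p+1\geq 3$ (Proposition \ref{degree}), or simply the standing fact that $Z$ can never contain a $\PP^3$. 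On balance the paper's route is shorter and conceptual, while yours is self-contained and independent of Lemma \ref{singular}.

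A point in your favor: your restriction to $p\geq 2$ is in fact necessary, not merely convenient. For $p=1$ the assertion ``$Z$ contains $L$'' fails --- for example $f=x_1$ gives $Z=\{x_1=0\}\cap Q_4$, which meets $L$ in a single point --- and the paper's dichotomy ``either some $Q_{(a,b)}$ contains $L$ or every point of $L$ lies on some $Q_{(a,b)}$'' silently uses that $\psi$ has degree $p-1\geq 1$. However, your stated reason for the exclusion is inaccurate: the $p=1$ members of the last case of Theorem \ref{main} are singular quadric threefolds of rank at most four in a $\PP^4$, not the smooth rank-five quadrics of the second case of that theorem, so they do not simply reduce to an earlier case; they are a genuine (small) gap in the proposition as stated.
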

\begin{proof}
Either at least one of the $Q_{(a,b)}$ contains $L$
or all the points of $L$ have a $Q_{(a,b)}$ through it,
so the union of $Q_{(a,b)}$ contains $L$. The complement
of $L$ in $Q_4$ is the (disjoint) union of $P_{(a,b)}^*$, and each
of them intersects $Z$ in an open subset of the corresponding
$Q_{(a,b)}$. This shows that $Z$ is equal to the union of $Q_{(a,b)}$,
and clearly the only intersections occur at points of $L$.
Finally, if $Z$ were singular at some point $A$ outside of $L$, then by 
Lemma \ref{singular}, $Z$ would be contained in 
$\Ann(A)\cap Q_4$, which is just a union of a vertical 
and a horizontal $\PP^3$.
\end{proof}
\begin{remark} The above 
proof of Theorem \ref{mainsmooth} also leads to a criterion for when the 
polynomial $f$ in \eqref{weil} leads to an irreducible divisor of bidegree $(1,p)$.
If the corresponding $Z$ is reducible, then the $(1,p)$ Weyl divisor has to split up
as a $(1,p_1)$ and some $(0,1)$ divisors, which are necessarily vertical
subspaces. Therefore the condition that all $Q_{(a,b)}$ are $\PP^2$-s is both 
necessary and sufficient.
\end{remark}

\bibliographystyle{amsalpha} 
\bibliography{Z1p_references}

\end{document}